\newcommand\define{\newcommand}
\DeclareMathOperator{\coker}{coker}
\define\Div{\mathrm{Div}}
\define\isoto{\xrightarrow{\sim}}
\define\onto{\twoheadrightarrow}
\DeclareMathOperator{\Spec}{Spec}
\define\E{\mathcal{E}}
\newcommand{\dia}[1]{{\langle #1 \rangle}}
\newcommand{\Z}{\mathbb{Z}}
\newcommand{\Q}{\mathbb{Q}}
\newcommand{\C}{\mathbb{C}}
\newcommand{\F}{\mathbb{F}}
\newcommand{\sO}{\mathcal{O}}
\newcommand{\p}{\mathfrak{p}}
\newcommand{\m}{\mathfrak{m}}
\newcommand{\Hom}{\mathrm{Hom}}
\newcommand{\Ext}{\mathrm{Ext}}
\newcommand{\Res}{\mathrm{Res}}
\newcommand{\End}{\mathrm{End}}
\newcommand{\Fr}{\mathrm{Fr}}
\newcommand{\lb}{{[\![}}
\newcommand{\rb}{{]\!]}}
\newcommand{\tC}{\tilde{C}}
\define\Pic{{\mathrm{Pic}}}
\define\GL{{\mathrm{GL}}}
\define\kcyc{\kappa_{\mathrm{cyc}}}
\define{\Fitt}{\mathrm{Fitt}}
\define{\Ann}{\mathrm{Ann}}
\newtheorem{thm}{Theorem}[section] 
\newtheorem*{thm*}{Theorem}
\newtheorem{cor}[thm]{Corollary}
\newtheorem{prop}[thm]{Proposition}
\newtheorem{lem}[thm]{Lemma}
\theoremstyle{definition}
\theoremstyle{remark}
\newtheorem{rem}[thm]{Remark}
\newcommand{\bT}{\mathbb{T}}
\newcommand{\tr}{{\mathrm{tr}}}
\newcommand{\sm}[4]{\ensuremath{\big(\begin{smallmatrix}#1 & #2 \\ #3 & #4\end{smallmatrix}\big)}}
\newcommand{\an}{\mathrm{an}}
\newcommand{\cD}{\mathcal{D}}
\newcommand{\Zp}{\Z_{(p)}}
\newcommand{\emrule}{\thinspace---\hskip.16667em\relax}
\newcommand{\tT}{\tilde{\bT}}
\newcommand{\tI}{\tilde{I}}
\let\c@equation\c@thm
\numberwithin{equation}{section}
\title{	Another look at rational torsion of modular Jacobians}
\author{Kenneth A.~Ribet and Preston Wake}
\address[Ribet]{Department of Mathematics, University of California, Berkeley, CA 94720-3840}
\email{ribet@math.berkeley.edu}
\address[Wake]{Department of Mathematics, Michigan State University, 
East Lansing, MI 48824}
\email{wakepres@msu.edu}
\begin{document}
\begin{abstract}
We study the rational torsion subgroup of the modular Jacobian $J_0(N)$ for $N$ a square-free integer. We give a new proof of a result of Ohta on a generalization of Ogg's conjecture: for a prime number $p \nmid 6N$, the $p$-primary part of the rational torsion subgroup equals that of the cuspidal subgroup.  Whereas previous proofs of this result used explicit computations of the cardinalities of these groups, we instead use their structure as modules for the Hecke algebra. 
\end{abstract}
\maketitle

\section{Introduction}
Let $N$ be a square-free integer and let $J_0(N)$ be the Jacobian of the modular curve $X_0(N)$. 
In the case where $N$ is prime, Ogg computed the order of the cuspidal subgroup (the subgroup of $J_0(N)$ generated by linear equivalence classes of differences of cusps) \cite{ogg1973} and conjectured that the cuspidal subgroup is the whole rational torsion subgroup  \cite{ogg1975}. Mazur's proof of Ogg's conjecture was one of the principal results of \cite{mazur1978}.

Ogg's computation of the order of the cuspidal subgroup has been generalized to other modular curves by Kubert and Lang (see \cite{KL1981} for example).  
The order of the cuspidal subgroup for $X_0(N)$ with square-free $N$ has been computed by Takagi \cite{takagi1997} using Kubert--Lang theory.  

A number of authors have considered the generalization of Ogg's
conjecture to $J_0(N)$ where $N$ is a positive integer that is not
necessarily prime.  Since the cuspidal subgroup of $J_0(N)$ may not
consist entirely of rational points, the generalization states that
the rational torsion subgroup of $J_0(N)$ is \emph{contained in} the
cuspidal subgroup of this Jacobian.  See Lorenzini \cite{lorenzini1995}, Conrad--Edixhoven--Stein \cite{CES2003}, Ohta \cite{ohta2013,ohta2014}, Yoo \cite{yoo2016},  and Ren \cite{ren2018} for details.
In particular, Ohta \cite{ohta2014} has proven the generalization of Ogg's conjecture to square-free $N$. That is, he proved that the $p$-primary parts of
$J_0(N)(\Q)_\mathrm{tor}$ and of the cuspidal subgroup are equal for $p \ge 5$ (and $p=3$ if $3 \nmid N$).  The computation of the order of the class group by Takagi was a key input in Ohta's proof.  

In this article,  we give a new proof that, for a prime $p \nmid 6N$, the $p$-primary parts of $J_0(N)(\Q)_\mathrm{tor}$ and of the cuspidal subgroup are equal. Our proof is by ``pure thought"\emrule we do not compute the order of either group but instead analyze their structure as modules for the Hecke algebra. 

Because we do not consider the $p$-primary parts for primes $p$ dividing $6N$, our results do not completely recover Ohta's.  We have not investigated the possibility of adapting our
method to handle these primes.
See the PhD thesis of Lui \cite{lui2019} for some computational evidence for the generalized Ogg conjecture for $2$- and $3$-primary parts.

\subsection{Structure of the proof}
Fix a prime $p \nmid 6N$. Let $C$ denote the $p$-primary part of cuspidal subgroup and $T$ denote the $p$-primary part of the rational torsion subgroup $J_0(N)(\Q)_\mathrm{tor}$.  Our proof of the following theorem is based on the structure of $C$ and $T$ as Hecke modules.   See \cref{subsec:modular setup} for precise definitions of the Hecke operators we use.
\begin{thm}
\label{thm:C=T}
The rational torsion subgroup $T$ coincides with its subgroup $C$.
\end{thm}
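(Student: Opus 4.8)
Since $N$ is square-free, the Atkin--Lehner involutions $w_d$ ($d \mid N$) are defined over $\Q$ and act simply transitively on the $2^{\omega(N)}$ cusps of $X_0(N)$; as $0$ and $\infty$ are $\Q$-rational, every cusp is $\Q$-rational, and likewise every modular unit on $X_0(N)$ is $\Q$-rational up to a scalar (its divisor, being supported on rational cusps, is Galois-invariant, and one concludes by Hilbert~90). Hence the cuspidal subgroup is contained in $J_0(N)(\Q)_{\mathrm{tor}}$, so $C \subseteq T$ and it remains to prove $T \subseteq C$. As $C$ and $T$ are finite $\mathbb{T}$-modules, it suffices to prove $T_{\mathfrak m} = C_{\mathfrak m}$ after localizing at each maximal ideal $\mathfrak m$ of $\mathbb{T}$ of residue characteristic $p$.

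The first step is to discard the non-Eisenstein maximal ideals. If $\bar\rho_{\mathfrak m}$ is irreducible then $J_0(N)[\mathfrak m]$ is a nonzero direct sum of copies of the irreducible two-dimensional $\mathbb{F}_p[G_\Q]$-module $\bar\rho_{\mathfrak m}$, which has no nonzero fixed vector, so $T_{\mathfrak m} = 0$; and a direct computation of the action of the $T_\ell$ ($\ell \nmid Np$, which act by $\ell + 1$) and the $U_\ell$ ($\ell \mid N$) on the divisor group of the cusps shows that $C$ is supported only at the Eisenstein maximal ideals, so $C_{\mathfrak m} = 0$ there too. Fix then an Eisenstein maximal ideal $\mathfrak m$ of residue characteristic $p$.

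The remaining argument is modeled on Mazur's treatment of prime level. Since $p \nmid N$, $J_0(N)$ has good reduction at $p$, and since $p \nmid 6N$ the semisimplification of $\bar\rho_{\mathfrak m}$ is $\mathbf 1 \oplus \bar\chi_{\mathrm{cyc}}$ with no room for a ramified twist, so every Jordan--H\"older factor of the finite $G_\Q$-module $J_0(N)[\mathfrak m]$ is isomorphic to $\mathbb{F}_p$ or to $\mu_p$. The Weil pairing coming from the principal polarization of $J_0(N)$ is perfect and makes the Hecke operators self-adjoint up to Atkin--Lehner twist; the twist merely permutes the Eisenstein maximal ideals in an order-preserving way, so after this bookkeeping one may treat $J_0(N)[\mathfrak m]$ as self-dual under a $\mu_p$-valued pairing, whence it has equally many $\mathbb{F}_p$- and $\mu_p$-constituents. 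I would then identify the two extreme layers: the $\mu_p$-type part is the $\mathfrak m$-torsion of the Shimura subgroup $\Sigma = \ker(J_0(N) \to J_1(N))$, which is of multiplicative type (so $\Sigma$ has no nonzero $\Q$-rational $p$-torsion, as $p$ is odd), while the trivial socle is $C[\mathfrak m]$, arising from the exact sequence $0 \to \mathcal U \to \Div^0(\mathrm{cusps}) \to C_0 \to 0$ of $\mathbb{T}[G_\Q]$-modules with $G_\Q$ acting trivially on the left two terms ($C_0 \supseteq C$ being the full cuspidal subgroup). Since $\mu_p$ carries no $G_\Q$-invariants, $J_0(N)[\mathfrak m]^{G_\Q} = C[\mathfrak m]$; combined with $C \subseteq T$ this gives $T_{\mathfrak m}[\mathfrak m] = C_{\mathfrak m}[\mathfrak m]$, and a d\'evissage up the $\mathfrak m$-adic filtration of $J_0(N)[\mathfrak m^\infty]$, using its $\mathbb{T}_{\mathfrak m}[G_\Q]$-module structure, upgrades this to $T_{\mathfrak m} = C_{\mathfrak m}$.

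The crux, and the main obstacle, is the identification of the socle of $J_0(N)[\mathfrak m]$ with $C[\mathfrak m]$ — equivalently, that the cuspidal subgroup and the Shimura subgroup are exact orthogonal complements under the Weil pairing — together with the d\'evissage, all \emph{without} computing the order of $C$, $T$ or $\Sigma$. At prime level these follow from the Gorenstein property of $\mathbb{T}_{\mathfrak m}$, which forces $J_0(N)[\mathfrak m]$ to be two-dimensional and $\mathrm{Ta}_{\mathfrak m}J_0(N)$ to be free of rank two; but for composite square-free $N$ this property can fail at Eisenstein primes, the multiplicity of $J_0(N)[\mathfrak m]$ may exceed two, there are several Eisenstein maximal ideals of differing shape, and the cuspidal group need not be cyclic over $\mathbb{Z}$. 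One must therefore argue directly with the $\mathbb{T}_{\mathfrak m}[G_\Q]$-module $J_0(N)[\mathfrak m^\infty]$, playing the explicit Hecke-module structure of the cuspidal divisor sequence off against that of the Shimura subgroup and exploiting the self-duality afforded by autoduality of $J_0(N)$. The remaining ingredients — rationality of cusps and modular units, the reduction to Eisenstein $\mathfrak m$, self-adjointness of Hecke operators, and the final counting — are routine.
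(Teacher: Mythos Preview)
Your proposal is not a proof: you yourself identify the crux\emrule the identification of the $G_\Q$-trivial socle of $J_0(N)[\mathfrak m]$ with $C[\mathfrak m]$, and the subsequent d\'evissage\emrule and then do not carry it out. Saying that ``one must therefore argue directly with the $\bT_{\mathfrak m}[G_\Q]$-module $J_0(N)[\mathfrak m^\infty]$'' is a description of the problem, not a solution. In Mazur's prime-level argument these steps rest on the Gorenstein property of $\bT_{\mathfrak m}$ and the resulting two-dimensionality of $J_0(N)[\mathfrak m]$; as you correctly note, both can fail for composite square-free $N$, and you offer no substitute. The self-duality step is also shakier than you indicate: the Rosati involution sends $T_n$ to $w_N T_n w_N$, so the Weil pairing identifies $J_0(N)[\mathfrak m]$ with the Cartier dual of $J_0(N)[w_N\mathfrak m]$, and for composite $N$ one may well have $w_N\mathfrak m \ne \mathfrak m$. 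Treating this as mere ``bookkeeping'' obscures that your count of $\mathbb F_p$- and $\mu_p$-constituents then compares two different kernels. Finally, the claim that the $\mu_p$-type part of $J_0(N)[\mathfrak m]$ is exactly $\Sigma[\mathfrak m]$ is already a serious theorem at prime level, and you give no argument for it here.

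The paper sidesteps all of this. Rather than analyzing the Galois-module structure of $J_0(N)[\mathfrak m]$, it compares \emph{annihilators}: it shows $I\subseteq\Ann_\bT(T)$ by proving (via Galois pseudorepresentations) that $I$ is generated by $T_q-q-1$ for $q\nmid 6Np$, which annihilate $T$ by Eichler--Shimura; and it shows $\Ann_\bT(C)\subseteq I$ by exhibiting a subquotient of $C$ with dual $\bT/I$, using Stevens's description of $C$ via the period lattice of Eisenstein series. Equality of annihilators plus cyclicity of $T^\vee$ (the one place where Mazur's Cartier--Serre argument is invoked) then forces $C=T$. No Shimura subgroup, no socle analysis, no d\'evissage, and no Gorenstein hypothesis are needed.
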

Let $\bT$ be the Hecke algebra acting on $J_0(N)$ and let $I \subset \bT$ be the Eisenstein ideal (see \cref{subsec:hecke alg} for the precise definitions). The following result is the essential contribution of this article.

\begin{thm}
\label{thm:Anns}
The annihilators of $T$ and $C$ as $\bT$-modules satisfy 
\[
\Ann_\bT(C) \subseteq I \subseteq \Ann_\bT(T).
\]
\end{thm}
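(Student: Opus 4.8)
The inclusion $I \subseteq \Ann_\bT(T)$ should be the ``known'' half: the rational torsion subgroup $J_0(N)(\Q)_\mathrm{tor}$ is annihilated by the Eisenstein ideal because the Galois representation on $T[\p]$ for any maximal ideal $\p \supseteq I$ is, after semisimplification, a sum of powers of the cyclotomic character, and a nontrivial rational torsion point forces the relevant Galois action to be trivial — so $\bT/I$ acts on $T$ through its ``constant term'' quotient and $I$ kills $T$. The plan is to spell this out using the theory of Eisenstein maximal ideals: for each prime $\p \mid I$ of residue characteristic $p$, one checks that $T[\p] \ne 0$ implies, via Mazur-type arguments (the point generates an unramified-outside-$p$, finite-flat-at-$p$ subgroup on which $\Gal(\barQ/\Q)$ acts trivially), that $\bT$ acts on $T$ through $\bT/I$; since $I$ is generated by $T_\ell - \ell - 1$ (for $\ell \nmid N$) and the appropriate operators at $\ell \mid N$, and these act as zero on the trivial-representation piece, $I \subseteq \Ann_\bT(T)$ follows. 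This step is essentially extracted from the existing literature (Mazur for $N$ prime, and the analogues cited — Ohta, Yoo, Ren — for square-free $N$), so I expect it to go through without much trouble.

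The genuinely new inclusion is $\Ann_\bT(C) \subseteq I$. The plan is to use the explicit description of the cuspidal group $C$ as a Hecke module coming from the exact sequence relating divisors supported on cusps, the subgroup of degree-zero divisors, and linear equivalence: the cusps of $X_0(N)$ for square-free $N$ are indexed by divisors of $N$, the free module on them carries a natural $\bT$-action, and the Manin–Drinfeld theorem identifies $C$ with the quotient (degree-zero cuspidal divisors modulo principal ones). The key point is that the Hecke action on the boundary (cusps) is \emph{Eisenstein}: the operators $T_\ell$ act on a cusp divisor class as multiplication by $\ell+1$ (for $\ell \nmid N$) up to the precise correction already recorded in \cref{subsec:modular setup}, and similarly the $U_\ell$ and Atkin–Lehner operators act by their Eisenstein eigenvalues. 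Concretely, I would show that every element of $I$ — being generated by $T_\ell - (\ell+1)$ and the analogous expressions at bad primes — annihilates the boundary divisor module, hence annihilates $C$; this gives $I \subseteq \Ann_\bT(C)$, which is the reverse of what is wanted, so instead the real content is the \emph{opposite} containment.

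To get $\Ann_\bT(C) \subseteq I$, the plan is to argue that $C$ is, as a $\bT$-module, \emph{faithful over $\bT/I$} — equivalently, $\Ann_\bT(C) = I$ after one also knows the easy $I \subseteq \Ann_\bT(C)$ just sketched. The mechanism I would use: identify $C$ with (a piece of) the cokernel of the map from the degree-zero divisor module on cusps into $J_0(N)$, and relate the order and $\bT$-module structure of $C$ to the Eisenstein quotient $\bT/I$ via the fact that the cuspidal divisor class group is cyclic as a $\bT/I$-module with the ``universal'' constant-term relations and nothing more. The cleanest route is probably to produce, for each maximal ideal $\p \supseteq I$, a surjection $\bT/I \onto$ (a summand of $C$) at $\p$ — using that the $\p$-torsion of $C$ is nonzero whenever $\p$ is Eisenstein of residue characteristic $p \nmid 6N$ (a statement one can extract from the nonvanishing of Eisenstein cohomology / the existence of the Eisenstein element) — so that no element outside $I$ can annihilate all of $C$. \textbf{The main obstacle} will be precisely this faithfulness: showing that $C$ is large enough to detect all of $\bT/I$, i.e.\ that for every prime $\p \mid I$ the local piece $C_\p$ is nonzero and in fact cyclic generating over $(\bT/I)_\p$. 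For $N$ prime this is Mazur's computation that $C$ has order equal to the numerator of $(N-1)/12$ and is cyclic of that order as a Hecke module; for square-free $N$ the analogue requires understanding the $\bT/I$-module structure of the cuspidal divisor class group in terms of the product-of-Bernoulli / generalized-Bernoulli numbers attached to the Eisenstein series, and the delicate point — the one I expect to consume most of the argument — is handling the interaction of the several Atkin–Lehner and $U_\ell$ operators at the primes $\ell \mid N$ so that the Eisenstein ideal $I$ is exactly the annihilator, with no spurious extra relations coming from oldforms or from the multiplicity of cusps above each divisor of $N$.
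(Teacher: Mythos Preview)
Your sketch for $I \subseteq \Ann_\bT(T)$ has a real gap. The Eichler--Shimura relation gives only that $T_q-(q+1)$ kills $T$ for primes $q\nmid Np$; it says nothing about the operators $U_\ell$ at $\ell\mid N$ or about $T_p$. Your argument then assumes, in passing, that ``$I$ is generated by $T_\ell-\ell-1$ (for $\ell\nmid N$) and the appropriate operators at $\ell\mid N$, and these act as zero on the trivial-representation piece'' --- but the second clause is exactly what is at issue, and your Galois-theoretic paragraph does not establish it (that $T$ carries trivial Galois action is tautological and does not by itself force the bad-prime Hecke relations). The paper resolves this by proving the nontrivial \cref{prop:gens of I}: the elements $T_q-(q+1)$ for $q\notin S$ already generate $I$. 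That proof uses the Galois pseudorepresentation over $\tT_\m$, local information at $p$ and at $\ell\mid N$, and a lemma of Ribet--Yoo to show that $(U_\ell-1)(U_\ell-\ell)$ and $\prod_{\ell\mid N}(U_\ell-1)$ lie in the ideal $J$ generated by the good-prime relations, after which a rank count against $\tT/\tI$ finishes. This is the step you are skipping.

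For $\Ann_\bT(C)\subseteq I$, your plan (explicit Hecke action on cusps, then faithfulness of $C$ over $\bT/I$ prime-by-prime) is a different route from the paper's, and as you yourself note the faithfulness is the whole difficulty; your proposal does not supply a mechanism beyond ``produce a surjection at each $\p$''. The paper's argument avoids any explicit cusp bookkeeping: via Stevens's isomorphism $\cD:U\isoto\E$ and an integrality result for modular units (\cref{prop:integrality of units}) one gets $\E\subseteq E_2(N,\Zp)$, and then the residue map exhibits $M_2(N,\Zp)/(S_2(N,\Zp)+E_2(N,\Zp))$ as a subquotient of $C$ whose Pontryagin dual is, by the duality of \cref{lem:duality}, exactly $\bT/I$. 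That single subquotient already forces $\Ann_\bT(C)\subseteq I$, with no localization or oldform analysis needed.
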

Because the annihilator of $T$ is contained in the annihilator of its submodule $C$, the two inclusions of \cref{thm:Anns} imply the equalities
\begin{equation}
\label{eq:anns}
\Ann_\bT(C)=I= \Ann_\bT(T).
\end{equation}
We deduce \cref{thm:C=T} from \eqref{eq:anns} at the end of \cref{subsec: Eichler-Shimura}, using an argument of Mazur to show that the Pontryagin dual of $T$ is cyclic as a $\bT$-module. 

\begin{cor}
\label{cor:order of T and C}
The index of $I$ in $\bT$ is the order of $C=T$.
\end{cor}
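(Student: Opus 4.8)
The plan is to promote the equality of annihilators in \eqref{eq:anns} to an identification $T^\vee \cong \bT/I$ of $\bT$-modules, where $T^\vee = \Hom(T,\QpZp)$ denotes the Pontryagin dual, and then simply read off the order. Since $T$ is finite, $\lvert T \rvert = \lvert T^\vee \rvert$, so it suffices to produce such an isomorphism.

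First I would invoke the work already carried out in deducing \cref{thm:C=T} from \eqref{eq:anns}: the argument of Mazur recalled at the end of \cref{subsec: Eichler-Shimura} shows that $T^\vee$ is cyclic as a $\bT$-module. Because $\bT$ is commutative, a cyclic $\bT$-module is isomorphic to $\bT/J$ with $J = \Ann_\bT(T^\vee)$. For a finite $\bT$-module $M$, multiplication by $t \in \bT$ on $M^\vee$ is the transpose of multiplication by $t$ on $M$, so $t$ kills $M$ if and only if it kills $M^\vee$ (Pontryagin duality is exact and faithful on finite modules); hence $\Ann_\bT(T^\vee) = \Ann_\bT(T)$, which equals $I$ by \eqref{eq:anns}. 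Therefore $T^\vee \cong \bT/I$.

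Taking cardinalities gives
\[
[\bT : I] = \lvert \bT/I \rvert = \lvert T^\vee \rvert = \lvert T \rvert = \lvert C \rvert,
\]
the last equality being \cref{thm:C=T}; in particular $\bT/I$ is finite, so the index is well defined.

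I do not anticipate a genuine obstacle: the corollary is a formal consequence of \eqref{eq:anns} together with the cyclicity of $T^\vee$, both of which are in hand by the time it is stated. The one point worth flagging is that Mazur's argument yields cyclicity of the Pontryagin dual $T^\vee$ rather than of $T$ itself, so the passage between $T$ and $T^\vee$\emrule via $\lvert T \rvert = \lvert T^\vee \rvert$ and $\Ann_\bT(T) = \Ann_\bT(T^\vee)$ for finite modules\emrule is exactly what makes the two ends of the argument meet.
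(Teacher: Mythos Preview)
Your argument is correct and matches the paper's own proof: both deduce $T^\vee \cong \bT/I$ from the cyclicity of $T^\vee$ (\cref{thm:T dual cyclic}) together with $\Ann_\bT(T)=I$ from \eqref{eq:anns}, and then read off the order. You have simply spelled out the details (that $\Ann_\bT(T^\vee)=\Ann_\bT(T)$ and $\lvert T^\vee\rvert=\lvert T\rvert$) that the paper leaves implicit.
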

\begin{proof}
Indeed, the Pontryagin dual of $T$ is isomorphic to $\bT/I$ because it is a cyclic $\bT$-module by \cref{thm:T dual cyclic} and has annihilator $I$ by \eqref{eq:anns}.
\end{proof}

We prove the inclusions $I \subseteq \Ann_\bT(T)$ and $\Ann_\bT(C) \subseteq I$ separately, by independent arguments. 
The Eichler--Shimura relation implies $T$ is annihilated by $T_q-q-1$ for almost all primes $q$ (see \cref{lem:E-S rel}). So, to prove the inclusion $I \subseteq \Ann_\bT(T)$, it is enough to show that elements of this type generate $I$. That is the content of the following theorem, which we prove in \cref{sec:Galois reps} using Galois representations.
\begin{thm}
\label{prop:gens of I}
Let $S$ be a finite set of prime numbers that contains all primes dividing $6Np$.  The Eisenstein ideal $I$ is generated by the set of all $T_q-q-1$ with $q \not \in S$.
\end{thm}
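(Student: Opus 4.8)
The plan is to prove the nontrivial inclusion $I \subseteq J$, where $J := (T_q - q - 1 : q \notin S)$; the inclusion $J \subseteq I$ is immediate. Since $\bT$ is Noetherian it suffices to prove $I_\m \subseteq J_\m$ for each maximal ideal $\m$, and this is automatic unless $J \subseteq \m$. So suppose $J \subseteq \m$. By the Chebotarev density theorem the Frobenius classes $\mathrm{Frob}_q$ with $q \notin S$ are dense in $G_{\Q, Np}$, so the semisimple residual Galois pseudo-representation attached to $\bT/\m$ has trace $1 + \chi_{\mathrm{cyc}}$; hence the residual Hecke eigensystem is that of the weight-$2$ Eisenstein series, so $T_q \equiv q + 1 \pmod \m$ for every prime $q \nmid N$, and $I \subseteq \m$. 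Thus $\m$ ranges over the finite set of Eisenstein maximal ideals, and it remains to show, for each such $\m$, that every generator $T_q - q - 1$ of $I$ (with $q \nmid N$) lies in $J_\m \subseteq \bT_\m$.

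The main tool is the Galois pseudo-representation: attached to $\bT_\m$ there is a continuous pseudo-representation $D$ of $G_{\Q, Np}$ valued in $\bT_\m$, with $\det D = \chi_{\mathrm{cyc}}$ and with $D(\mathrm{Frob}_q)$ of characteristic polynomial $X^2 - T_q X + q$ for all $q \nmid Np$, obtained by gluing the representations attached to the eigenforms occurring in $\bT_\m$. Modulo $J_\m$ we have $T_q = q + 1$ for all $q \notin S$, so by density of the $\mathrm{Frob}_q$ the reduction $D \bmod J_\m$ has the same trace as $\mathbf 1 \oplus \chi_{\mathrm{cyc}}$; since $p \ge 5$ the residual characters $\mathbf 1$ and $\overline{\chi}_{\mathrm{cyc}}$ are distinct, so lifting the associated idempotent identifies $D \bmod J_\m$ with the genuine direct sum $\mathbf 1 \oplus \chi_{\mathrm{cyc}}$. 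Evaluating the trace at $\mathrm{Frob}_q$ then gives $T_q \equiv q + 1 \pmod{J_\m}$ for \emph{all} $q \nmid Np$; this settles every generator $T_q - q - 1$ of $I$ with $q \nmid Np$, including the finitely many with $q \in S$ (namely the primes $q \mid 6$ with $q \nmid N$).

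The one remaining generator is $T_p - p - 1$ (recall $p \nmid N$). An eigenform of $\bT_\m$ is congruent mod $\m$ to the Eisenstein series, so its $p$-th Hecke eigenvalue is $\equiv 1 + p \pmod \m$; thus $T_p \in \bT_\m^\times$, and by Hensel's lemma $X^2 - T_p X + p$ factors in $\bT_\m[X]$ as $(X - \alpha)(X - \beta)$ with $\alpha \in \bT_\m^\times$ and $\beta \in \m$. The algebra $\bT_\m$ being $p$-ordinary, by work of Mazur--Wiles, Hida, and Ohta the restriction $D|_{G_{\Q_p}}$ is reducible with an unramified quotient character $\delta$ satisfying $\delta(\mathrm{Frob}_p) = \alpha$. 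Reduced mod $J_\m$, the representation $D|_{G_{\Q_p}}$ becomes $\mathbf 1 \oplus \chi_{\mathrm{cyc}}$, whose unique unramified constituent is $\mathbf 1$ (as $\overline{\chi}_{\mathrm{cyc}}$ is ramified at $p$); hence $\delta \equiv \mathbf 1 \pmod{J_\m}$, that is $\alpha \equiv 1$, and substituting $X = \alpha$ into $X^2 - T_p X + p = 0$ gives $T_p \equiv p + 1 \pmod{J_\m}$. Therefore $I_\m = J_\m$, and the theorem follows. (Should the definition of $I$ also list operators at primes $\ell \mid N$ --- such as $U_\ell - 1$ --- among its generators, these are handled the same way: mod $J_\m$ the local representation $D|_{G_{\Q_\ell}}$ is the unramified sum $\mathbf 1 \oplus \chi_{\mathrm{cyc}}$, and the description of $U_\ell$ in terms of $D|_{G_{\Q_\ell}}$ via local Langlands and Atkin--Lehner theory then forces $U_\ell \equiv 1 \pmod{J_\m}$.)

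I expect the real difficulty to be not the Chebotarev bookkeeping but the construction and local analysis of $D$ over the a priori non-reduced, possibly non-Gorenstein local ring $\bT_\m$: one needs the ordinary filtration of $D|_{G_{\Q_p}}$ defined over $\bT_\m$ with $\delta(\mathrm{Frob}_p)$ equal to the chosen unit root $\alpha$, and --- in the variant with generators at $\ell \mid N$ --- the compatibility of $D|_{G_{\Q_\ell}}$ with the $U_\ell$-action, where the correct normalization (that $U_\ell$ reduces to $1$, not merely to a root of the expected quadratic) must be pinned down. Granting these local--global compatibilities over $\bT_\m$, the identity $I = J$ follows formally from Chebotarev density as above.
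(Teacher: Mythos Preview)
Your treatment of the operators $T_q$ with $q\nmid N$ (including $q=p$) is essentially the same as the paper's \cref{lem:J contains}(a), and is correct.  The gap is at the primes $\ell\mid N$.

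You assume, in effect, that $I$ is generated by the $T_q-q-1$ with $q\nmid N$, together perhaps with $U_\ell-1$ for $\ell\mid N$, and then claim that local--global compatibility forces $U_\ell\equiv 1\pmod{J_\m}$.  That last claim is not justified and is in general false.  What the pseudorepresentation gives you at $\ell\mid N$ is only the trace identity $U_\ell+\ell U_\ell^{-1}\equiv 1+\ell\pmod{J_\m}$, i.e.\ $(U_\ell-1)(U_\ell-\ell)\in J_\m$; it does not single out the root $1$.  Concretely, if $f$ is a newform of level $M\mid N$ with $\ell\nmid M$, its two $\ell$-stabilizations at level $N$ have $U_\ell$-eigenvalues the two Frobenius eigenvalues, which reduce to $1$ and to $\ell$ modulo~$\m$; both stabilizations can lie in $\bT_\m$, so $U_\ell$ is not a scalar in $\bT_\m/J_\m$.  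For the same reason, the ideal $\tI=\Ann_{\tT}(E_2(N))$ is \emph{not} generated by $\{T_q-q-1:q\nmid N\}$ alone: it contains $(U_\ell-1)(U_\ell-\ell)$ for each $\ell\mid N$ and also $\prod_{\ell\mid N}(U_\ell-1)$ (since every $E_d$ has $U_\ell$-eigenvalue $1$ for some $\ell$), and these do not lie in the ideal generated by the $T_q-q-1$ with $q\nmid N$ a~priori.

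The paper closes this gap in two steps that your argument does not supply.  First, the Ribet--Yoo lemma is invoked to show that every Eisenstein-congruent \emph{cuspidal} eigenform has $a_\ell=1$ for \emph{some} $\ell\mid N$, which yields $\prod_{\ell\mid N}(U_\ell-1)\in J_\m$ (not $U_\ell-1\in J_\m$ for each $\ell$).  Second, rather than trying to list generators of $I$ and check them one by one, the paper passes to the larger algebra $\tT$, exhibits a surjection $\Zp[x_1,\dots,x_r]/\mathcal I\twoheadrightarrow \tT/J$ (using exactly the relations $(U_\ell-1)(U_\ell-\ell)$ and $\prod_\ell(U_\ell-1)$ just placed in $J$), and observes that both $\Zp[x_1,\dots,x_r]/\mathcal I$ and $\tT/\tI$ are free $\Zp$-modules of rank $2^r-1$; hence the surjection $\tT/J\twoheadrightarrow\tT/\tI$ is an isomorphism.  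This rank-counting step is the essential point your proposal is missing.
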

Let $\tT$ be the Hecke algebra acting on the full space of modular forms (not just cusp forms) of weight $2$ and level $\Gamma_0(N)$. Let $\tI \subset \tT$ be the Eisenstein ideal and $J \subset \tT$ be the ideal generated by the set of all $T_q-q-1$ for all $q \notin S$.  
Let 
\[
\alpha: \tT/J \onto \tT/\tI
\]
be the quotient map arising from the inclusion $J \subseteq \tI$.
The content of \cref{prop:gens of I} is that $J = \tI$, or, in other words, that $\alpha$ is an isomorphism. To give the flavor of the proof of \cref{prop:gens of I}, we now explain informally why $\tT/J$ and $\tT/\tI$ have the same minimal primes: Think of $\Spec(\tT/\tI)$ as the set of normalized Eisenstein eigenforms and $\Spec(\tT/J)$ as the set of those normalized eigenforms whose associated Galois pseudorepresentation is $\epsilon+1$, where $\epsilon: G_\Q \to \Z_p^\times$ is the $p$-adic cyclotomic character. 
Because of oldforms, the map sending an normalized eigenform to its Galois pseudorepresentation is not necessarily injective. However, for each $\ell \mid N$, the $U_{\ell}$-eigenvalue singles out a root of the characteristic polynomial of  Frobenius at $\ell$, and the map sending an normalized eigenform to its pseudorepresentation plus this additional data \emph{is} injective. 
We show that the normalized Eisenstein eigenforms fill out all the possibilities for roots of the characteristic polynomial of Frobenius at $\ell$ with pseudorepresentation $\epsilon+1$.  This shows that $\Spec(\tT/\tI) \subseteq \Spec(\tT/J)$ is an equality.  In \cref{sec:Galois reps}, we give a more precise version of this argument to establish that $\alpha$ is an isomorphism.

Our proof of the inclusion $\Ann_\bT(C) \subseteq I$ is given in \cref{sec:Ann(C)}. It is based on an interpretation, due to Stevens \cite{stevens1985},  of $C$ in terms of the lattice of Eisenstein series that have \emph{integral periods}. We use an integrality result about modular units to show that this lattice is contained in the lattice of Eisenstein series with integral $q$-expansion and thereby exhibit a subquotient of $C$ that has annihilator~$I$.

\begin{rem}
An alternate proof of the inclusion $\Ann_\bT(C) \subseteq I$ will be given in forthcoming work of Jordan, Ribet, and Scholl related to \cite{JRS2022}. This proof uses $p$-adic Hodge theory to show that the Hecke algebra $\tT$ is $p$-saturated in the endomorphism ring of the generalized Jacobian of $X_0(N)$ relative to the cusps, and then proceeds along the lines of the proof of \cite[(1.7)]{ribet1983}.
\end{rem}

\begin{rem}
In \cref{sec:units}, we prove an integrality result about modular units  (\cref{prop:integrality of units}) using arithmetic geometry techniques.  This integrality also follows from an explicit description of the modular units given by Takagi \cite{takagi1997} using Kubert--Lang theory.  We avoid invoking Takagi's result both in accordance with our desire for a `pure thought' proof and to show that our method can be used to give alternate proofs of some of his results (see \cref{cor:lattices equal}).
\end{rem}

\section{Preliminaries}
In this section we set up notation for modular forms and Hecke algebras, recall a duality result, and employ arguments from \cite{mazur1978} used to reduce the proof of \cref{thm:C=T} to \cref{thm:Anns}.

\subsection{Modular forms}
\label{subsec:modular setup}Let $M_2(N)$ denote the $\C$-vector space of modular forms of weight 2 and level $\Gamma_0(N)$ and let $S_2(N)$ and $E_2(N)$ be the subspaces of $M_2(N)$ consisting of cusp forms and Eisenstein series, respectively.  For a subring $R \subset \C$, let $M_2(N,R) \subset M_2(N)$ denote the $R$-module of modular forms whose $q$-expansion (at the cusp $\infty$) is in $R\lb q \rb$. Let $S_2(N,R) = S_2(N) \cap M_2(N,R)$ and $E_2(N,R) =E_2(N) \cap M_2(N,R)$.

The dimension of $E_2(N)$ is $2^r-1$, where $r$ is the number of prime divisors of $N$ (see \cite[Theorem 4.6.2, pg.~133]{DS2005}, for example).   For each $d \mid N$ with $d >1$, let $E_d$ be the element of $E_2(N)$ such that for prime numbers $\ell$, 
\[
a_\ell(E_d) = \begin{cases}
1 & \mbox{if } \ell \mid d \\
\ell & \mbox{if }\ell \nmid d
\end{cases},
\]
if $\ell \mid N$, and $a_\ell(E_d) = 1+\ell$ if $\ell \nmid N$.  These eigenforms form a basis of $E_2(N)$. Since each $E_d$ is in $E_2(N,\Zp)$, we infer that $E_2(N,\Zp)$ is a free $\Zp$-module of rank $2^r-1$ (although not necessarily with $\{E_d\}$ as basis).

\subsection{Hecke operators}
\label{subsec:modular curves and Hecke operators}
The spaces of modular forms just introduced are equipped with actions of the classical Hecke operators $T_n$ for $n \ge 1$.  These operators arise from correspondences on modular curves (see, for example, \cite[Chapter 7]{shimura1971}).  As such, they also act on geometric objects such as $J_0(N)$ and the divisor group of cusps, but there is some ambiguity as to how a correspondence acts (using either the ``Picard" or the ``Albanese" functoriality\emrule see the discussion in \cite[pg.~443--444]{ribet1990}).  A summary of our conventions is:
\begin{itemize}
\item The endomorphism of $J_0(N)$ denoted $T_\ell$ here is the same as the one in \cite[pg.~444]{ribet1990} defined using Picard functoriality. It satisfies $T_\ell= \xi_\ell^*$,  where $\xi_\ell$ is the endomorphism defined by Shimura in \cite[Chapter 7]{shimura1971}.
\item The action of $T_\ell$ on the cusps is the \emph{dual} of the ``standard" action (for example, our action of $T_\ell$ on the cusps is the same as the the action of $^t T_\ell$ on the cusps described in \cite[pg.~15]{stevens1982}).
\end{itemize} 
To avoid any ambiguity, we now spell this out in more detail.

We use the following notation for modular curves: $Y_0(N)^\an$ is the quotient of the upper half-plane by $\Gamma_0(N)$, thought of as a complex analytic manifold, and $Y_0(N)$ is the smooth model of $Y_0(N)^\an$ over $\Z_{(p)}$ (recall that $p \nmid N$, so such a smooth model exists).   We use analogous notations $X_0(N)^\an$ and $X_0(N)$ for the closed modular curve.  Let $C_0(N) = X_0(N) - Y_0(N)$ denote the set of cusps. Let $J_0(N)$ denote the Jacobian variety of $X_0(N)$.

For a prime number $\ell$, let $t_\ell= \sm{1}{0}{0}{\ell} \in \GL_2(\Q)^+$ and consider the subgroup $\Gamma_0(N;\ell)= \Gamma_0(N)\cap t_\ell^{-1}  \Gamma_0(N)  t_\ell$.  Let $Y_0(N;\ell)^\an$ be the quotient of the upper half-plane by $\Gamma_0(N;\ell)$. There is a correspondence
\[
\xymatrix{
& Y_0(N;\ell)^\an \ar[dl]_-{\pi} \ar[dr]^-{t_\ell} & \\
Y_0(N)^\an & & Y_0(N)^\an
}
\]
where $\pi$ is the quotient map and $t_\ell$ is the map induced by $x \mapsto t_\ell x$.  This correspondence extends uniquely to give a correspondence on $X_0(N)$ that preserves the cusps; we use the same names $\pi$ and $t_\ell$ for the maps in this correspondence.  Define the element $T_\ell \in \End(J_0(N))$ by
\[
T_\ell = (t_\ell)_* \pi^*.
\]
The same formula defines an endomorphism $T_\ell$ of the group $\Div^0(C_0(N))$ of degree-zero divisors on the cusps.  The map
\[
\Div^0(C_0(N)) \to J_0(N)
\]
sending a divisor to its class is equivariant for these actions of $T_\ell$. 

By identifying $M_2(N)$ with the space of differential forms on $Y_0(N)^\an$, the formula $T_\ell =  (t_\ell)_* \pi^*$ gives an action of $T_\ell$ on $M_2(N)$.  This action preserves the subspaces $S_2(N)$ and $E_2(N)$, and preserves the $R$-submodule $M_2(N,R)$ if $R$ is a $\Z[1/N]$-algebra.  
\subsection{Hecke algebras}
\label{subsec:hecke alg}
 Let $\tT \subset \End(M_2(N))$ be the $\Zp$-algebra generated by the Hecke operators $T_\ell$ for all primes $\ell \ge 1$.  Let $\tI \subset \tT$ be the annihilator of the space of Eisenstein series. Let $\bT$ be the image of $\tT$ in $\End(S_2(N))$ and let $I \subset \bT$ be the image of $\tI$.  As is customary, for a prime divisor $\ell$ of $N$, we denote the operator $T_\ell$ by $U_\ell$.

\subsection{Duality}
The following duality is well known. The duality result for cuspforms (see, for example, \cite[Theorem 2.2]{ribet1983}) extends to all modular forms because there is no nonzero mod $p$ modular form of weight $2$ and level $\Gamma_0(N)$ with constant $q$-expansion when $p>3$.  This fact can be proven using a mod $p$ Atkin--Lehner-type result (see \cite[Lemma II.5.9, pg.~83]{mazur1978} for the prime-level version; the proof generalizes to higher levels as in \cite[Lemma 3.5]{agashe2018} or \cite[Lemma (2.1.1)]{ohta2014}) together with the fact that there are no nonzero mod $p$ modular forms of weight $2$ and level $1$ \cite[Proposition II.5.6, pg.~81]{mazur1978}.
Alternatively,  a nonzero constant is the $q$-expansion of a modular form of weight $0$, but,  by \cite[Some Corollaries (2), pg.~55]{katz1977}, a nonzero mod $p$ modular form of weight $2$ cannot have the same $q$-expansion as a modular form of smaller weight.
\begin{lem}
\label{lem:duality} 
Let $M= M_2(N,\Zp)$. 
\begin{enumerate}
\item The pairing
\begin{equation}
\label{eq:pair M and T}
M \times \tT \to \Zp, \ (f,T) \mapsto a_1(Tf)
\end{equation}
is a perfect pairing of free $\Zp$-modules of finite rank. 
\item Let $X \subset M$ be a $\tT$-submodule.  The pairing \eqref{eq:pair M and T} induces an isomorphism
\[
\Hom(M/X,\Zp) \cong \Ann_{\tT}(X).
\]
Moreover, if $M/X$ is torsion free, then \eqref{eq:pair M and T} induces an isomorphism 
\[
\Hom(X,\Zp) \cong \tT/\Ann_{\tT}(X).
\]
\end{enumerate}

\end{lem}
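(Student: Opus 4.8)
The plan is to base everything on the classical identity $a_1(T_nf)=a_n(f)$ for $n\ge 1$ — valid for every $n$, including those divisible by a prime $\ell\mid N$, where $T_\ell=U_\ell$ — together with the fact recalled just above that there is no nonzero mod $p$ modular form of weight $2$ and level $\Gamma_0(N)$ with constant $q$-expansion. Write
\[
\iota\colon\tT\longrightarrow\Hom_{\Zp}(M,\Zp),\qquad T\longmapsto\bigl(f\mapsto a_1(Tf)\bigr)
\]
for the map induced by \eqref{eq:pair M and T}. Since $M$ is free of finite rank and $\tT$ is a priori a finitely generated torsion-free, hence free, $\Zp$-module (being a $\Zp$-submodule of $\End_{\Zp}(M)$), proving part (1) amounts to showing that $\iota$ is an isomorphism; the ``other side'' of perfectness then follows formally for finite free modules.

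I would first prove $\iota$ injective. If $a_1(Tf)=0$ for all $f\in M$, then for every $f\in M$ and every $n\ge1$ one has $a_n(Tf)=a_1(T_nTf)=a_1(T\,T_nf)=0$ (Hecke operators commute and $\tT$ preserves $M$), so $Tf\in M$ has constant $q$-expansion. If some $Tf$ were nonzero, dividing it by the exact power of $p$ dividing its constant term and reducing modulo $p$ — reduction being injective on $M/pM$ because $M=M_2(N,\Q)\cap\Zp\lb q\rb$ — would produce a nonzero mod $p$ weight-$2$ level-$\Gamma_0(N)$ form with constant $q$-expansion, which is impossible; hence $T$ kills $M$ and therefore kills $M_2(N)=M\otimes_{\Zp}\C$, so $T=0$. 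For surjectivity I would reduce modulo $p$: the induced pairing $M/pM\times\tT/p\tT\to\F_p$ has trivial left radical, since an $\bar f$ in the radical satisfies $a_n(\bar f)=a_1(\overline{T_n}\,\bar f)=0$ for all $n\ge1$ and so is a mod $p$ form with constant $q$-expansion, hence zero. Triviality of the left radical forces the map $\tT/p\tT\to\Hom_{\F_p}(M/pM,\F_p)$ to be surjective; this map is $\iota\otimes_{\Zp}\F_p$, and since $\tT$ and $\Hom_{\Zp}(M,\Zp)$ are finitely generated over the local ring $\Zp$, Nakayama's lemma gives that $\iota$ is surjective, hence an isomorphism. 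This proves (1).

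For (2) I would transport the statements along the isomorphism $\iota$. Let $X\subseteq M$ be a $\tT$-submodule. I claim $\iota$ identifies $\Ann_\tT(X)$ with $\{\phi\in\Hom_{\Zp}(M,\Zp):\phi|_X=0\}=\Hom_{\Zp}(M/X,\Zp)$. Indeed, $T\in\Ann_\tT(X)$ clearly gives $\iota(T)|_X=0$; conversely, if $\iota(T)|_X=0$ then for $f\in X$ and $n\ge1$ we have $T_nf\in X$ (as $X$ is $\tT$-stable), so $a_n(Tf)=a_1(T\,T_nf)=0$, whence $Tf\in M$ has constant $q$-expansion and the reduction argument above forces $Tf=0$, i.e. $T\in\Ann_\tT(X)$. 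This gives the first isomorphism. For the second, $M/X$ being torsion-free over the DVR $\Zp$ makes it free, so $0\to X\to M\to M/X\to 0$ splits as an exact sequence of finite free $\Zp$-modules; applying $\Hom_{\Zp}(-,\Zp)$ and using the first isomorphism together with $\tT\cong\Hom_{\Zp}(M,\Zp)$ yields $\Hom_{\Zp}(X,\Zp)\cong\tT/\Ann_\tT(X)$.

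The only genuinely non-formal ingredient — and the step I expect to be the main obstacle — is that the pairing \eqref{eq:pair M and T} records only the coefficients $a_n$ with $n\ge1$ and is blind to the constant term of a modular form; bridging this gap is exactly where the input on mod $p$ forms with constant $q$-expansion enters, both in the injectivity and surjectivity of $\iota$ in part (1) and in the annihilator computation in part (2). The secondary technical point is the passage from nondegeneracy modulo $p$ to perfectness over $\Zp$, which Nakayama handles once $\tT$ is known to be a finitely generated $\Zp$-module.
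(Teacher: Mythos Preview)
Your proof is correct and takes essentially the same route as the paper's: the key input in both is the absence of nonzero mod-$p$ weight-$2$ forms of level $\Gamma_0(N)$ with constant $q$-expansion, and both deduce perfectness from this together with the identity $a_1(T_nf)=a_n(f)$. The paper works with the dual map $\Psi\colon M\to\Hom_{\Zp}(\tT,\Zp)$ and shows directly that $\coker\Psi$ is $p$-torsion-free, whereas you work with $\iota\colon\tT\to\Hom_{\Zp}(M,\Zp)$ and appeal to Nakayama; these are equivalent packagings, and your treatment of part~(2) is more explicit than the paper's ``follows immediately.''

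One small point to tighten: triviality of the left radical of the mod-$p$ pairing only gives that $M/pM\hookrightarrow(\tT/p\tT)^*$ is injective; by itself this does not force $\tT/p\tT\to(M/pM)^*$ to be surjective. You need to combine it with the rank inequality $\rk_{\Zp}\tT\le\rk_{\Zp}M$ (from the injectivity of $\iota$ you already proved) to get equal dimensions over $\F_p$, after which both maps are isomorphisms.
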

\begin{proof} 
The analogue of (1) with $\Q$-coefficients is easy and standard. This analogue implies that the map
\[
\Psi: M_2(N,\Zp) \to \Hom_{\Zp}(\tT,\Zp),  \ \Psi(f)(T)=a_1(Tf)
\]
is injective and that $\coker\Psi$ is a finite abelian $p$-group.  To show that $\Psi$ is an isomorphism, it suffices to show that $\coker\Psi$ is $p$-torsion free. Let  $\phi \in \Hom_{\Zp}(\tT,\Zp)$ be such that $p\phi= \Psi(f)$ for some $f \in M_2(N,\Zp)$; we will show that $\phi$ is in the image of $\Psi$, and hence that $\coker\Psi$ is $p$-torsion free. 
Note that for all $n \ge 1$,
\[
a_n(f)=\Psi(f)(T_n)=p\phi(T_n).
\]
Since $\phi(T_n) \in \Zp$ for all $n \ge1$, this implies that $f \pmod p$ is a constant and thus it is $0$, as was recalled in the sentences before the statement of the lemma. Hence $g:=p^{-1}f $ is in  $M_2(N,\Zp)$ and $\phi=\Psi(g)$ is in the image of $\Psi$.  This shows that $\Psi$ is an isomorphism, completing the proof of (1).  Statement (2) follows immediately from (1).
\end{proof}

\begin{lem}
\label{lem:rank of T/I}
The $\Zp$-module $\tT/\tI$ is free of rank $2^r-1$, where $r$ is the number of prime divisors of $N$.
\end{lem}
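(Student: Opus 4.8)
The plan is to deduce this from the duality of Lemma \ref{lem:duality} together with the explicit basis of Eisenstein series described in \cref{subsec:modular setup}. By definition, $\tI$ is the annihilator in $\tT$ of the space of Eisenstein series; equivalently, $\tI = \Ann_\tT(E_2(N,\Zp))$, since $E_2(N,\Zp)$ spans $E_2(N)$ over $\Q$. First I would check that the quotient $M_2(N,\Zp)/E_2(N,\Zp)$ is torsion-free: an element of $M_2(N,\Zp)$ whose image in $M_2(N)$ lies in $E_2(N)$ actually lies in $E_2(N,\Zp)$, because $E_2(N,\Zp) = E_2(N) \cap M_2(N,\Zp)$ by definition. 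Hence the second isomorphism in part (2) of \cref{lem:duality}, applied with $X = E_2(N,\Zp)$, gives
\[
\Hom_{\Zp}(E_2(N,\Zp),\Zp) \cong \tT/\Ann_\tT(E_2(N,\Zp)) = \tT/\tI.
\]

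Now $E_2(N,\Zp)$ is a free $\Zp$-module of rank $2^r-1$, as recalled in \cref{subsec:modular setup} (it contains the basis $\{E_d\}_{d \mid N, d>1}$ of $E_2(N)$, and is finitely generated and torsion-free as a $\Zp$-submodule of the finite-dimensional space $M_2(N,\Zp)\otimes\Q$). Therefore its $\Zp$-linear dual is also free of rank $2^r-1$, and the displayed isomorphism shows $\tT/\tI$ is free of rank $2^r-1$ as a $\Zp$-module. This completes the proof.

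The only point requiring a small amount of care — and the one I would expect to be the main obstacle — is confirming that the hypotheses of \cref{lem:duality}(2) are genuinely met, namely that $X = E_2(N,\Zp)$ is a $\tT$-submodule of $M = M_2(N,\Zp)$ with torsion-free quotient. The submodule property holds because the Hecke operators preserve $E_2(N)$ and preserve $M_2(N,\Zp)$ (the latter as recorded in \cref{subsec:modular curves and Hecke operators}, since $\Zp$ is a $\Z[1/N]$-algebra as $p \nmid N$), hence preserve their intersection $E_2(N,\Zp)$. The torsion-freeness of $M/X$ is immediate from the definition $E_2(N,\Zp) = E_2(N) \cap M_2(N,\Zp)$, as noted above: if $pf \in E_2(N,\Zp)$ with $f \in M_2(N,\Zp)$, then $f \in E_2(N)$ (since $E_2(N)$ is a $\Q$-subspace) and $f \in M_2(N,\Zp)$, so $f \in E_2(N,\Zp)$. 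With these two observations in hand, the lemma follows formally.
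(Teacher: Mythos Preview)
Your proof is correct and follows essentially the same approach as the paper: apply \cref{lem:duality}(2) with $X = E_2(N,\Zp)$ to obtain $\tT/\tI \cong \Hom_{\Zp}(E_2(N,\Zp),\Zp)$, and then invoke the fact from \cref{subsec:modular setup} that $E_2(N,\Zp)$ is free of rank $2^r-1$. The paper's proof is more terse, leaving implicit the verifications (that $E_2(N,\Zp)$ is a $\tT$-submodule and that $M/X$ is torsion-free) which you have spelled out carefully.
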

\begin{proof}
\cref{lem:duality}(2) applied to $X=E_2(N,\Zp)$ gives an isomorphism
\[\tT/\tI \cong \Hom_{\Zp}(E_2(N,\Zp),\Zp).\]
In particular,  $\tT/\tI$ is a free $\Zp$-module of the same rank as $E_2(N,\Zp)$, which is $2^r-1$ as discussed in \cref{subsec:modular setup}.
\end{proof}

\subsection{The Eichler--Shimura relation}
The following lemma, which is well known, shows that many of the elements of $I$ annihilate $T$.  In \cref{sec:Galois reps}, we will prove the inclusion $I \subseteq \Ann_\bT(T)$ by showing that these elements generate $I$.
\label{subsec: Eichler-Shimura}
\begin{lem}
\label{lem:E-S rel}
For every prime $q \not \in S$, the group $T$ is annihilated by $T_q-q-1$. 
\end{lem}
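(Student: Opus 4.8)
The plan is to use the Eichler--Shimura congruence relation together with the fact that the points of $T$ are rational over $\Q$. Recall that $T$ is the $p$-primary part of $J_0(N)(\Q)_{\mathrm{tor}}$, and $p \nmid 6N$. First I would fix a prime $q \notin S$; in particular $q \nmid Np$ and $q \neq p$, so $J_0(N)$ has good reduction at $q$ and $p$ is invertible in the residue field $\F_q$. Since $q \nmid Np$, reduction modulo $q$ induces an injection on prime-to-$q$ torsion, so in particular $T$ injects into $J_0(N)(\overline{\F}_q)$; because the points of $T$ are defined over $\Q$, their images are fixed by the Frobenius $\mathrm{Fr}_q \in \Gal(\overline{\F}_q/\F_q)$, i.e.\ $\mathrm{Fr}_q$ acts as the identity on the image of $T$.

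Next I would invoke the Eichler--Shimura relation, which in $\End(J_0(N)_{/\F_q})$ reads
\[
T_q = \mathrm{Fr}_q + q \cdot \mathrm{Fr}_q^{-1} = \mathrm{Fr}_q + \langle q \rangle \mathrm{Fr}_q^{\vee},
\]
where $\mathrm{Fr}_q$ is the Frobenius endomorphism and $\mathrm{Fr}_q^\vee$ its dual (Verschiebung); for level $\Gamma_0(N)$ the diamond operator is trivial, so $T_q = \mathrm{Fr}_q + \mathrm{Fr}_q^\vee$ and $\mathrm{Fr}_q \circ \mathrm{Fr}_q^\vee = q$. Evaluating this identity of endomorphisms on a point $x$ in the image of $T$ in $J_0(N)(\overline{\F}_q)$: since $\mathrm{Fr}_q(x) = x$, we get $\mathrm{Fr}_q^\vee(x) = q x$ as well (apply $\mathrm{Fr}_q^\vee$ to $\mathrm{Fr}_q(x)=x$ and use $\mathrm{Fr}_q^\vee \mathrm{Fr}_q = q$), hence
\[
T_q(x) = \mathrm{Fr}_q(x) + \mathrm{Fr}_q^\vee(x) = x + q x = (q+1)x.
\]
Therefore $(T_q - q - 1)$ kills the image of $T$ in $J_0(N)(\overline{\F}_q)$, and since reduction at $q$ is injective on $T$, it follows that $(T_q - q - 1)T = 0$.

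The only genuine subtlety is bookkeeping about which version of the Eichler--Shimura relation matches the conventions fixed in \cref{subsec:modular curves and Hecke operators}, where $T_\ell$ is defined via Picard functoriality as $(t_\ell)_* \pi^*$. I would check that with these conventions the congruence formula $T_q \equiv \mathrm{Fr}_q + \mathrm{Fr}_q^\vee \pmod{q}$ holds on the special fiber (this is standard — see e.g.\ the discussion in \cite[Chapter 7]{shimura1971} or the treatment in \cite[Chapter II]{mazur1978}), the point being only that the two terms on the right are $\mathrm{Fr}_q$ and its dual in \emph{some} order, which is all that the argument uses since $x$ is Frobenius-fixed. I expect this convention-checking to be the main (minor) obstacle; the rest is routine.
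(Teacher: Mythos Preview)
Your proposal is correct and follows essentially the same approach as the paper: invoke the Eichler--Shimura relation $T_q = \Fr_q + V_q$ on the special fiber, use that rational points are Frobenius-fixed so that $T_q$ acts as $1+q$ on the image of $T$ in $J_0(N)(\F_q)$, and conclude via the injectivity of reduction modulo $q$ on prime-to-$q$ torsion. The paper's version is slightly terser (it works directly in $J_0(N)(\F_q)$ rather than $J_0(N)(\overline{\F}_q)$ and cites \cite[Appendix]{katz1981} for the injectivity), but the argument is the same.
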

\begin{proof}
Let $q$ be a prime that is not in $S$. The Eichler--Shimura relation states that $T_q = \Fr_q + V_q$ on $J_0(N)_{/\F_q}$ \cite[pg.~89]{mazur1978}, where $\Fr_q$ is the Frobenius endomorphism of the group scheme $J_0(N)_{/\F_q}$ and $V_q$ is the Verschiebung. Since $\Fr_q$ is the identity on $J_0(N)(\F_q)$,  this implies that $T_q=1+q$ on $J_0(N)(\F_q)$. But, since $q \nmid 2N$, the reduction modulo $q$ map induces an injection $T \to J_0(N)(\F_q)$ of $\bT$-modules (see \cite[Appendix]{katz1981}, for example), so $T_q = 1 +q$ on $T$ as well.
\end{proof}

\subsection{Cyclicity of the dual of $T$}
\cref{thm:C=T} follows from \cref{thm:Anns} together with the following mild generalization of a theorem Mazur \cite[Corollary II.14.8 pg.~199]{mazur1978}, which appears in the work of Ohta \cite[Proposition (3.5.4)]{ohta2014}.

\begin{thm}[Mazur, Ohta]
\label{thm:T dual cyclic}
The Pontryagin dual of $T$ is cyclic as a $\bT$-module.
\end{thm}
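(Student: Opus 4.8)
The plan is to follow Mazur's strategy \cite[Chapter~II]{mazur1978}, reducing the statement to a multiplicity-one bound on the $\m$-torsion of $J_0(N)$ and then invoking Ohta's extension of that bound to square-free level. Since $\bT$ is a finite $\Z_p$-algebra it is semilocal, so a finite $\bT$-module $M$ is cyclic if and only if $\dim_{\bT/\m}(M/\m M)\le 1$ for every maximal ideal $\m\subset\bT$ (apply Nakayama's lemma over each local factor $\bT_\m$ of $\bT$). Taking $M=T^\vee$ and using that Pontryagin duality identifies $T^\vee/\m T^\vee$ with $(T[\m])^\vee$, we are reduced to proving
\[
\dim_{\bT/\m} T[\m]\le 1\qquad\text{for every maximal ideal }\m\subset\bT .
\]
If $T[\m]\ne 0$, then every $T_q-q-1$ with $q\notin S$ lies in $\m$, because these elements annihilate $T$ by \cref{lem:E-S rel}; hence $\m\supseteq I$ by \cref{prop:gens of I}, i.e.\ $\m$ is an Eisenstein maximal ideal, and it suffices to treat such $\m$.

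For an Eisenstein $\m$, a class in $T[\m]$ is a point of $J_0(N)(\Q)$ killed by $\m$, so $T[\m]=J_0(N)[\m]^{G_\Q}$, where $J_0(N)[\m]\subseteq J_0(N)(\barQ)$ denotes the subgroup annihilated by $\m$. Since $\m$ is Eisenstein, every Jordan--H\"older constituent of $J_0(N)[\m]$ as a module over $(\bT/\m)[G_\Q]$ is a copy of the trivial module $\mathbf{1}$ or of $\bar\epsilon$, the reduction modulo $p$ of the cyclotomic character $\epsilon$. As $p\ge 5$, $\bar\epsilon$ is a nontrivial character of $G_\Q$, so $H^0(G_\Q,\bar\epsilon)=0$ and the ``cyclotomic'' constituents contribute nothing to the $G_\Q$-invariants. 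Thus the bound we want follows from the sharper assertion that $J_0(N)[\m]^{G_\Q}$ is at most one-dimensional over $\bT/\m$.

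Establishing that sharper assertion is the crux, and is where I expect essentially all of the work to be. For $N$ prime it is Mazur's multiplicity-one theorem for Eisenstein primes: $J_0(N)[\m]$ is then two-dimensional, an extension of $\bar\epsilon$ by $\mathbf{1}$, and $J_0(N)[\m]^{G_\Q}$ is the one-dimensional cuspidal line \cite[Chapter~II]{mazur1978}. For square-free $N$ one cannot argue this way directly, since genuine multiplicity one can fail at an Eisenstein maximal ideal --- for instance when some $U_\ell$ with $\ell\mid N$ is congruent modulo $\m$ to both $1$ and $\ell$ --- so $J_0(N)[\m]$ may have dimension larger than $2$. The way around this is a local analysis at each prime $\ell\mid N$: one uses the Tate-curve (semistable) reduction of $X_0(N)$ at $\ell$, the connected-\'etale sequence of the corresponding finite flat group scheme, and the fact that the $U_\ell$-eigenvalue modulo $\m$ pins down the relevant local Galois extension, in the spirit of Mazur--Ribet, to see that, whatever the total dimension of $J_0(N)[\m]$ may be, its $G_\Q$-fixed part remains one-dimensional. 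This is the content of Ohta's structural results; concretely, the statement we need is \cite[Proposition~(3.5.4)]{ohta2014} together with the local computations on which it rests, and combining it with the two preceding paragraphs proves the theorem. The single hardest ingredient is thus this multiplicity-one-type bound on $J_0(N)[\m]^{G_\Q}$ for square-free $N$; the remainder of the argument is formal.
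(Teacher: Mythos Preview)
Your Nakayama reduction to the bound $\dim_{\bT/\m} T[\m]\le 1$ for each maximal ideal $\m$ is exactly how the paper begins. From that point on, however, the two arguments diverge sharply.

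The paper does not analyze $J_0(N)[\m]^{G_\Q}$ via Galois representations or any local study at the primes $\ell\mid N$. Instead it exploits the fact that $p$ is a prime of good reduction: the reduction-mod-$p$ map embeds $T$ (hence $T[\m]$) into $J_0(N)(\F_p)$, and the Cartier--Serre isomorphism embeds $J_0(N)[p](\bar\F_p)\otimes_{\F_p}\bar\F_p$ Hecke-equivariantly into $H^0(X_0(N)_{/\bar\F_p},\Omega^1)$. The $q$-expansion principle then gives $\dim_{\bT/\m}H^0(X_0(N)_{/\bar\F_p},\Omega^1)[\m]\le 1$ at once. This argument is short, works uniformly for \emph{every} maximal ideal (so there is no need to single out Eisenstein ones or to invoke \cref{prop:gens of I}), and requires no input whatsoever about the bad primes $\ell\mid N$ or about possible failure of multiplicity one for $J_0(N)[\m]$.

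Your sketch, by contrast, defers the substantive step to ``\cite[Proposition~(3.5.4)]{ohta2014} together with the local computations on which it rests''. But that proposition is exactly what the paper cites as the \emph{statement} of \cref{thm:T dual cyclic}, so your appeal to it is circular. The Jordan--H\"older remarks about $\mathbf{1}$ and $\bar\epsilon$ buy nothing toward the bound, since nothing you wrote excludes several copies of $\mathbf{1}$ among the constituents; you recognize this, but it means all the content has been pushed into the black-boxed step. The Cartier--Serre plus $q$-expansion argument is what you are missing: it replaces the delicate local analysis you outline with a two-line embedding into differentials in characteristic $p$, and renders the whole multiplicity-one discussion unnecessary.
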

\begin{proof}
It is equivalent to show that,  for all maximal ideals $\m$ of $\bT$,  the $\m$-torsion subgroup of $T$, which we denote by $T[\m]$, has dimension at most $1$ as a $\bT/\m$-vector space.  
Since, as was noted in the proof of \cref{lem:E-S rel}, the reduction map $T \to J_0(N)(\F_p)$ is injective, $T[\m]$ is contained in the $\m$-torsion subgroup of $J_0(N)[p] (\bar{\F}_p)$.  Furthermore,  as in \cite[Proposition II.14.7 pg.~119]{mazur1978}, the Cartier--Serre isomorphism induces an injective homomorphism
\[
J_0(N)[p] (\bar{\F}_p)\otimes_{\F_p} \bar{\F}_p \ \hookrightarrow \ H^0(X_0(N)_{/\bar{\F}_p},\Omega^1).
\] 
Note that, as in Mazur's proof, the Cartier--Serre isomorphism is Hecke-equivariant with respect to the our chosen Hecke action on $J_0(N)$ (see also \cite[Proposition 6.5]{wiles1980}), so this is a homomorphism of $\bT$-modules.
The $\m$-torsion subgroup of the right-hand side has dimension at most $1$ as a $\bT/\m$-vector space by the $q$-expansion principle.  Hence $T[\m]$ has dimension at most $1$ as a $\bT/\m$-vector space.
\end{proof}

\subsection{Equality of annihilators implies equality} The cyclicity proven in \cref{thm:T dual cyclic} implies that the equality $T=C$ of $\bT$-modules follows from the equality $\Ann_{\bT}(T)=\Ann_{\bT}(C)$ of annilhilators:

\begin{proof}[Proof of \cref{thm:C=T} assuming \cref{thm:Anns}]
The inclusion $C \subseteq T$ induces a surjection $T^\vee \onto C^\vee$ of Pontryagin duals. By \cref{thm:Anns} and \eqref{eq:anns}, this surjection is a map of $\bT/I$-modules and $C^\vee$ is a faithful $\bT/I$-module.  
Then the map $T^\vee \onto C^\vee$ is an isomorphism because, 
by \cref{thm:T dual cyclic},  $T^\vee$ is a cyclic $\bT$-module.
\end{proof}

\section{Galois representations and the annihilator of $T$}
\label{sec:Galois reps}
In this section, we prove \cref{prop:gens of I} and the inclusion $\Ann_{\bT}(T) \supseteq I$ of \cref{thm:Anns}.
Fix a finite set $S$ of prime numbers containing all primes dividing $6Np$.

\subsection{Galois representations}
\label{subsec:galois reps}
Let $J \subset \tT$ be the ideal generated by $T_q-q-1$ for all $q \not \in S$.
\begin{lem}
\label{lem:J contains}
The ideal $J$ contains the following elements:
\begin{enumerate}[label=(\alph*)]
\item $T_q-q-1 $ for each prime $q \nmid N$, 
\item $(U_\ell-1)(U_\ell-\ell)$ for each prime $\ell \mid N$,  and
\item $\prod_{\ell \mid N} (U_\ell-1)$, where the product is over the set of prime divisors of $N$.
\end{enumerate}
\end{lem}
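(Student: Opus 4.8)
The plan is to attach a two-dimensional Galois pseudorepresentation to $\tT$ (which we replace by its $p$-adic completion, so that ideals are closed) and to deduce all three relations from the fact that, modulo $J$, this pseudorepresentation is forced to equal $1+\epsilon$, where $\epsilon$ is the $p$-adic cyclotomic character. First I would record the standard fact that there is a continuous pseudorepresentation $\tau\colon G_{\Q,Np}\to\tT$ with $\det\tau=\epsilon$ and $\tr\tau(\Fr_q)=T_q$ for every prime $q\nmid Np$: since $\tT$ is reduced, it embeds into $\prod_f K_f$ over the $\Gal$-orbits $f$ of normalized eigenforms of $M_2(N)$, and one takes the trace of Deligne's representation in the cuspidal components and of $1\oplus\epsilon$ in the Eisenstein components; the resulting pseudorepresentation sends $\Fr_q$ to the image of $T_q$ for $q\nmid Np$, so since these Frobenii are dense in $G_{\Q,Np}$ and $\tT$ is closed in $\prod_f K_f$, it takes values in $\tT$.

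For part (a): consider the continuous map $\delta\colon G_{\Q,Np}\to\tT$, $\delta(g)=\tr\tau(g)-1-\epsilon(g)$. For every prime $q\notin S$ one has $\delta(\Fr_q)=T_q-q-1\in J$ by the definition of $J$; as $J$ is closed and the Frobenii of primes outside $S$ are dense, $\delta$ takes values in $J$ everywhere. Evaluating at $\Fr_q$ for arbitrary $q\nmid Np$ gives $T_q-q-1\in J$, and the vanishing of $\delta$ modulo $J$ shows $\bar\tau=1+\epsilon$ over $\tT/J$ (equal traces, $p$ odd). The remaining prime $q=p$ of (a) is handled by the crystalline Eichler--Shimura relation: $T_p$ is the trace of crystalline Frobenius on Deligne's representation at $p$, while $(1+\epsilon)|_{G_{\Q_p}}$ is crystalline with crystalline-Frobenius eigenvalues $1$ and $p$, so $T_p\equiv p+1\pmod J$.

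For part (b): fix $\ell\mid N$. Because $N$ is square-free, local--global compatibility at $\ell$ takes the Hecke-algebra form $U_\ell^2-\tr\tau(\sigma_\ell)\cdot U_\ell+\ell=0$ in $\tT$, where $\sigma_\ell$ is any lift of $\Fr_\ell$; the trace $\tr\tau(\sigma_\ell)$ is independent of the lift because the monodromy at $\ell$ is unipotent in every eigenform component, and the identity can be checked component by component from the structure of the local Galois representations at $\ell$ (unramified in the $\ell$-old and Eisenstein cases, special in the $\ell$-new case). Reducing modulo $J$, where $\bar\tau=1+\epsilon$ is unramified at $\ell$ and hence $\tr\tau(\sigma_\ell)\equiv 1+\ell$, this relation becomes $(U_\ell-1)(U_\ell-\ell)\in J$. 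For part (c): by (a) and (b), $\tT/J$ is generated over $\Zp$ by the images $\bar U_\ell$ of the $U_\ell$, each satisfying $(\bar U_\ell-1)(\bar U_\ell-\ell)=0$, so at every point of $\Spec(\tT/J)$ each $\bar U_\ell$ is $1$ or $\ell$; a point at which $\bar U_\ell=\ell$ for \emph{all} $\ell\mid N$ would carry the system of Hecke eigenvalues of the (non-holomorphic) weight-$2$ level-$1$ Eisenstein series, which is realized by no modular form of weight $2$ and level $\Gamma_0(N)$ in any characteristic --- there is none in characteristic $0$, and since $p\nmid N$ every mod-$p$ weight-$2$ form of level $\Gamma_0(N)$ lifts to characteristic $0$. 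Hence $\prod_{\ell\mid N}(\bar U_\ell-1)$ vanishes at every point of $\Spec(\tT/J)$, and one promotes this to $\prod_{\ell\mid N}(U_\ell-1)\in J$ by using the relation of (b) to rule out a nilpotent discrepancy.

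The step I expect to be the main obstacle is exactly this promotion in (c): one must exclude a non-reduced contribution to $\tT/J$ at the Eisenstein points --- equivalently, one must control $\tI/J$, which is the content of \cref{prop:gens of I}. I would attack it by realizing $\tT/J$ as a quotient of the universal pseudo-deformation ring of $1+\bar\epsilon$ equipped with the extra datum of a root of $(X-1)(X-\ell)$ for each $\ell\mid N$, and showing that the component on which all these roots equal $\ell$ is absent because it is not modular of level $\Gamma_0(N)$. By comparison, the construction of $\tau$, the density argument for (a), and the reduction of (b) to a local--global relation are routine.
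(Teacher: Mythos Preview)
Your overall framework---attach a pseudorepresentation $\tau$ to $\tT$, use Chebotarev density to get $\tau\equiv 1+\epsilon\pmod J$, then read off the relations---matches the paper's, and your treatment of (b) is essentially the paper's (the identity $U_\ell^2-\tr\tau(\Fr_\ell)U_\ell+\ell=0$ is exactly the relation $\tr(\rho_\m)(\Fr_\ell)=U_\ell+\ell U_\ell^{-1}$ rewritten). But there are two genuine gaps.

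\textbf{Part (a) at $q=p$.} The crystalline-Frobenius trace is not a function of the pseudorepresentation, so knowing $\bar\tau=1+\epsilon$ over $\tT/J$ does not by itself determine the crystalline-Frobenius trace modulo $J$. The paper's argument is different and purely Galois-theoretic: working at an Eisenstein maximal ideal $\m$, Fontaine's theorem forces ordinarity, so on a decomposition group at $p$ one has $\tau=\kcyc\lambda^{-1}+\lambda$ with $\lambda$ unramified and $\lambda(\Fr_p)=u_p$ the unit root of $X^2-T_pX+p$. Evaluating the congruence $\tau\equiv \kcyc+1\pmod{J_\m}$ at both $\Fr_p$ and $\sigma\Fr_p$ for $\sigma$ in inertia with $\kcyc(\sigma)\not\equiv 1$, and subtracting, isolates $u_p^{-1}-1\in J_\m$; then $T_p-p-1\in J_\m$ follows.

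\textbf{Part (c).} First, your lifting step is invalid: a characteristic-$0$ lift of a mod-$p$ eigensystem has eigenvalues only \emph{congruent} to the given ones, so the lift of a hypothetical system with all $U_\ell\equiv\ell$ could be a cusp form with $U_\ell\ne\ell$ in characteristic $0$, and your non-existence claim in characteristic $p$ remains unproved. Second, as you yourself note, promoting ``vanishes at every point of $\Spec(\tT/J)$'' to membership in $J$ amounts to controlling $\tI/J$, i.e.\ to \cref{prop:gens of I}---but the paper proves \cref{prop:gens of I} \emph{from} this lemma, so that route is circular. The paper sidesteps both issues by proving the stronger statement that $\prod_{\ell\mid N}(U_\ell-1)=0$ in $\tT_\m$ outright: since $\tT_\m$ is reduced it injects into $\prod_{\p\subset\m}K_\p$, and one checks vanishing in each $K_\p$. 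For Eisenstein $\p$ this is immediate from the shape of the $E_d$. For cuspidal $\p$ the missing ingredient in your argument is a lemma of Ribet and Yoo: a weight-$2$ newform of square-free level $M$ whose residual representation is $1\oplus\bar\epsilon$ has $a_\ell(f)=1$ \emph{exactly} (not merely $\equiv 1$) for some $\ell\mid M$. This forces one factor $U_\ell-1$ to vanish in every cuspidal component, with no nilpotent ambiguity to resolve.
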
 

We will prove \cref{lem:J contains} using properties of Galois representations associated to eigenforms.  Before continuing with the proof, we review the required properties. To this end,  fix a maximal ideal $\m \subset \tT$ and assume that $\m \supseteq J$.

For each minimal prime ideal $\p \subset \m$, there is a corresponding (not necessarily cuspidal) eigenform $f_\p$ with coefficients in $\sO_\p=\tT_\m / \p$ (which is a finite extension of $\Z_p)$, defined by $a_n(f_\p) = T_n \pmod{\p}$.  Let $K_\p$ be the field of fractions of $\sO_\p$. 
To each $f_\p$, there is an associated two-dimensional semisimple Galois representation $\rho_\p$ with coefficients in $K_\p$, contructed by Shimura. See \cite[Theorem 3.1]{DDT1997} for a list the properties of $\rho_\p$, which have been established by the work of many mathematicians; we will rewrite these properties in terms of a representation $\rho_\m$ that we now define.

Since $\bT_\m$ is reduced (see \cite{CE1998}),  there is an injective homomorphism
\begin{equation}
\label{eq:normalization of tT}
\tT_\m \hookrightarrow \tT_\m \otimes \Q = \prod_{\p \mid \m} K_\p.
\end{equation}
The product of the Galois representations $\rho_\p$ is a continuous representation
\[
\rho_\m: G_\Q \to \GL_2(\tT_\m \otimes \Q)
\]
with the following properties:
\begin{enumerate}
\item $\rho_\m$ is unramified outside $Np$,
\item $\det(\rho_\m) = \kcyc$, the $p$-adic cyclotomic character,
\item $\tr(\rho_\m)(\Fr_q) = T_q$ for each prime $q \nmid Np$, where $\Fr_q$ is an arithmetic Frobenius.
\end{enumerate}
Since $\m \supseteq J$,  the Chebotarev density theorem implies that $\tr(\rho_\m)(\sigma) \equiv \kcyc(\sigma)+1 \pmod{\m}$ for every $\sigma \in G_\Q$. This implies that $\m$ is ordinary in the sense that $T_p$ is invertible modulo $\m$. Indeed,  a theorem of Fontaine (see \cite[Theorem 2.6]{edixhoven1992}) implies that the restriction of $\tr(\rho_\m) \pmod{\m}$ to the inertia group at $p$ is the sum of two non-trivial characters in the non-ordinary case. Let $u_p \in \tT_\m^\times$ denote the unit root of $x^2-T_px+p$, which exists by Hensel's lemma.

The restrictions of $\rho_\m$ to decomposition groups at primes dividing $Np$ can be described as follows:
\begin{enumerate}[resume]
\item For every prime $\ell \nmid N$ and every choice of arithmetic Frobenius $\Fr_\ell \in G_\Q$ the trace of $\rho_\m(\Fr_\ell)$ is given by
\[
\tr(\rho_\m)(\Fr_\ell) = U_\ell + \ell U_\ell^{-1}.
\]
\item For every $\sigma$ in a decomposition group at $p$,  the trace of $\rho_\m(\sigma)$ is given by
\[
\tr(\rho_\m)(\sigma) = \kcyc(\sigma)\lambda^{-1}(\sigma) +\lambda(\sigma)
\]
where $\lambda$ is the unramified character sending Frobenius to $u_p$
\end{enumerate}

\begin{proof}[Proof of \cref{lem:J contains}]
It is enough to show the containment after completion at all maximal ideals $\m \subset \tT$.  We can and do assume $\m \supseteq J$ because the statement is clear if $J_\m = \tT_\m$.

The Chebotarev density implies that the image of $\tr(\rho_\m)$ is contained in $\tT_\m$ and that $J_\m$ is the ideal generated by
\[
\tr(\rho_\m)(\sigma) - \kcyc(\sigma) -1
\]
for all $\sigma \in G_\Q$. It follows from (3) that $T_q -q-1 \in J_\m$ for all $q \nmid N$.  

To prove part $(a)$, it remains to show that $T_p -p-1 \in J_\m$, or, equivalently, that $u_p -1 \in J_\m$. 
Let $\sigma$ be an element of the inertia group at $p$ such that $\kcyc(\sigma) \not\equiv 1 \bmod{p}$. Then (5) implies
\begin{align*}
\tr(\rho_\m)(\Fr_p) - \kcyc(\Fr_p) -1  & = \kcyc(\Fr_p)(u_p^{-1}-1) +u_p-1 \in J_\m, \\
\tr(\rho_\m)(\sigma\Fr_p) - \kcyc(\sigma\Fr_p) -1  & =\kcyc(\sigma) \kcyc(\Fr_p)(u_p^{-1}-1) +u_p-1 \in J_\m.
\end{align*}
Subtracting these equations, we find that $(\kcyc(\sigma)-1) \kcyc(\Fr_p)(u_p^{-1}-1) \in J_\m$.  By the assumption on $\sigma$, the element $(\kcyc(\sigma)-1)\kcyc(\Fr_p)\in \Z_p$ is a unit, so $(u_p^{-1}-1) \in J_\m$. Hence $T_p-p-1 \in J_\m$, completing the proof of $(a)$.

For part $(b)$, note that by (4)
\[
\tr(\rho_\m)(\Fr_\ell)-\ell -1 = U_\ell +\ell U_\ell^{-1} -\ell -1 \in J_\m.
\]
Since $(U_\ell-1)(U_\ell-\ell) = U_\ell ( U_\ell +\ell U_\ell^{-1} -\ell -1)$, this implies 
 $(U_\ell-1)(U_\ell-\ell) \in J_\m$.

For part $(c)$,  let $x = \prod_{\ell \mid N} (U_\ell-1)$. We will show that $x=0$ in $\tT_\m$ by showing that $x$ maps to zero in each factor $K_\p$ under the injective map \eqref{eq:normalization of tT}.  
First suppose that $f_\p$ is an Eisenstein series. 
The only eigenforms in $E_2(N)$ are the forms $E_d$ defined in \cref{subsec:modular setup}, hence $f_\p=E_d$ for some $d \mid N$ with $d>1$. In particular, $a_\ell(f_\p)=1$ for every prime $\ell \mid d$, so $x$ maps to zero in that factor.  Next suppose that $f_\p$ is a cuspform. Then $a_\ell(f_\p)=1$ for some $\ell \mid N$ by the following lemma of Ribet and Yoo.  Hence $x=0$ in $\tT_\m$. 
\end{proof}

\begin{lem}[Ribet, Yoo]
Let $M$ be a square-free integer and let $p \nmid 6M$. Suppose that $f$ is a newform of weight 2 and level $M$ such that $a_q(f) \equiv q +1 \pmod{\p}$ for all $q \nmid Mp$. Then $a_\ell(f) =1$ for some $\ell \mid M$. 
\end{lem}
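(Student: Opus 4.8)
The plan is to argue by contradiction via the mod-$\p$ Galois representation of $f$. Since $M$ is square-free and $f$ has weight $2$, one has $a_\ell(f)^2=1$ for every prime $\ell\mid M$, so $a_\ell(f)\in\{1,-1\}$; thus it suffices to rule out the case $a_\ell(f)=-1$ for \emph{all} $\ell\mid M$, which I assume from now on. Let $\bar\rho_f$ be the semisimple mod-$\p$ representation attached to $f$. The hypothesis $a_q(f)\equiv q+1\pmod{\p}$ for $q\nmid Mp$, together with the Chebotarev density and Brauer--Nesbitt theorems, forces $\bar\rho_f\cong\mathbf 1\oplus\bar\epsilon$, where $\bar\epsilon$ is the mod-$p$ cyclotomic character (nontrivial since $p\ge5$).

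I would then read off the local behavior at each $\ell\mid M$. By local--global compatibility at $\ell$, the (Frobenius-semisimple) Weil--Deligne representation of $\rho_f$ at $\ell$ has Frobenius eigenvalues $a_\ell(f)$ and $\ell\, a_\ell(f)^{-1}$; reducing mod $\p$ and comparing with $\bar\rho_f^{\mathrm{ss}}=\mathbf 1\oplus\bar\epsilon$, which is unramified at $\ell$ with Frobenius eigenvalues $1$ and $\ell$, gives $\{a_\ell(f),\ \ell\,a_\ell(f)^{-1}\}\equiv\{1,\ell\}\pmod{\p}$. Since $a_\ell(f)=-1$ and $p>2$, the only possibility is $\ell\equiv-1\pmod p$, in which case $a_\ell(f)\equiv\ell$; that is, the $U_\ell$-eigenvalue of $f$ reduces to the Frobenius eigenvalue $\ell$ rather than $1$. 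Consequently, under the standing assumption, the mod-$\p$ Hecke eigensystem $\lambda$ of $f$ has Galois pseudorepresentation $\mathbf 1+\bar\epsilon$ and satisfies $\lambda(U_\ell)\equiv\ell\pmod{\p}$ for every $\ell\mid M$.

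The remaining task---which I expect to be the crux of the argument---is to show that no cuspidal eigensystem can have this shape, contradicting the existence of $f$. First, $\lambda$ is not the reduction of the eigensystem of any Eisenstein series: the Eisenstein eigenforms in $E_2(M)$ are precisely the $E_d$ with $d\mid M$, $d>1$, and each $E_d$ has $U_\ell$-eigenvalue $1$ for every $\ell$ in the nonempty set of primes dividing $d$, whereas $\lambda(U_\ell)\equiv\ell\not\equiv1\pmod p$ for all $\ell\mid M$. In the language of the introduction: as $d$ runs over the divisors $>1$ of $M$, the eigenforms $E_d$ realize every way of choosing one root of the Frobenius polynomial $(x-1)(x-\ell)$ at each $\ell\mid M$ \emph{except} the choice that picks $\ell$ at every prime, and the assertion is that this one remaining choice is realized by no cusp form either. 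For $M$ prime this is Mazur's theorem that $U_\ell$ acts as $1$ on the Eisenstein-congruent part of $S_2(M)$ \cite{mazur1978}; the square-free case is due to Ribet and Yoo \cite{yoo2016} (compare also \cite{ohta2014}). I would invoke this, or else reprove it---for instance by a multiplicity-one style analysis of $J_0(M)[\m]$ for the Eisenstein maximal ideal $\m=\ker\lambda$, showing this module cannot be supported on the Jacobian, or by checking that the Galois deformation ring of $\mathbf 1\oplus\bar\epsilon$ subject to being ordinary at $p$ and Steinberg with unramified Frobenius eigenvalue $\equiv\ell$ at each $\ell\mid M$ admits no irreducible (hence no cuspidal) specialization. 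Either way, we reach the desired contradiction, so $a_\ell(f)=1$ for at least one $\ell\mid M$.
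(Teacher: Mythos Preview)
The paper's own ``proof'' of this lemma is a one-line citation: \emph{See \cite{yoo2017}}. There is no argument given in the paper to compare against.

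Your preliminary observations are correct: for a weight-$2$ newform with $\ell\,\|\,M$ one has $a_\ell(f)\in\{\pm 1\}$; the hypothesis forces $\bar\rho_f^{\mathrm{ss}}\cong\mathbf 1\oplus\bar\epsilon$; and if $a_\ell(f)=-1$ then comparing Frobenius eigenvalues gives $\ell\equiv -1\pmod p$, so $a_\ell(f)\equiv\ell$. But the step you identify as ``the crux''\emrule that no cuspidal eigensystem with residual pseudorepresentation $\mathbf 1+\bar\epsilon$ can have $U_\ell$-eigenvalue $\equiv\ell$ at every $\ell\mid M$\emrule is, in the situation you have set up (all $a_\ell(f)=-1$, all $\ell\equiv -1$), precisely the statement of the lemma. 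Invoking ``the square-free case due to Ribet and Yoo'' at that point is circular: you are citing the lemma to prove the lemma. Your preliminary reduction is sound but does not actually reduce the problem; it reformulates it.

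Your fallback suggestions\emrule a multiplicity-one analysis of $J_0(M)[\m]$ or a deformation-ring argument ruling out irreducible lifts with the prescribed Steinberg condition at each $\ell$\emrule are indeed along the lines of how results of this type are proven (and are close in spirit to what appears in Yoo's work), but you have only named the methods, not carried them out. So in effect your proposal and the paper end up in the same place: both defer the actual content to Yoo's paper. The difference is that the paper does so openly in one line, whereas your write-up gives the impression of a self-contained argument before arriving at the same deferral.
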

\begin{proof}
See \cite{yoo2017}. 
\end{proof}

\subsection{Proof that $J=\tI$}
We now prove \cref{prop:gens of I}, which is the claim that $J=\tI$, by showing that the elements of $J$ listed in \cref{lem:J contains} generate $\tI$.
 
\begin{proof}[Proof of \cref{prop:gens of I}]
We will show that the natural surjection $\alpha: \tT/J \onto \tT/\tI$ is an isomorphism. This will imply that the elements $T_q-q-1$ for $q \not\in S$ generate $\tI$ and hence also generate $I$.

Let $N=\ell_1\cdots \ell_r$ be the prime factorization of $N$ and let $R=\Zp[x_1,\dots,x_r]$.
By \cref{lem:J contains} part (a), the $\Zp$-algebra homomorphism
\[
s: R \to \tT/J, \ x_i \mapsto U_i-1
\]
is surjective. Let $\mathcal{I} \subset R$ be the ideal generated by $x_1x_2\cdots x_r$ and $x_i(x_i+1-\ell_i)$ for $i=1,\dots,r$.  
The map $s$ induces a surjection $\bar{s}:R/\mathcal{I} \onto \tT/J$ because $s(\mathcal{I})=0$ by \cref{lem:J contains} parts (b) and (c).  We claim that $R/\mathcal{I}$ is a free $\Zp$-module of rank $2^r-1$, which is the same as the rank of $\tT/\tI$ by \cref{lem:rank of T/I}. Indeed,  consider the ideal $\mathcal{I}' \subset \mathcal{I}$ generated by $x_i(x_i+1-\ell_i)$ for $i=1,\dots,r$. Then $R/\mathcal{I}'$ is freely generated by the $2^r$ monomials of degree at most $1$ in each $x_i$.  The ideal generated by $x_1x_2\cdots x_r$ in $R/\mathcal{I}'$ is equal to its $\Zp$-span, which is a direct summand of $R/\mathcal{I}'$ as $\Zp$-modules.  Since $\mathcal{I}=\mathcal{I}'+x_1x_2\cdots x_rR$, this implies that $R/\mathcal{I}$ is free of rank $2^r-1$.

The composition
\[
\alpha \circ \bar{s}: R/\mathcal{I} \onto \tT/\tI
\]
is a surjective homomorphism of free $\Zp$-modules of the same finite rank and is therefore an isomorphism. Since $\bar{s}$ is surjective, this implies that $\alpha$ is an isomorphism,  which is the content of the theorem.
\end{proof}

Since $\Ann_{\bT}(T) \supseteq J$ by \cref{lem:E-S rel}, \cref{prop:gens of I} implies the inclusion $\Ann_{\bT}(T) \supseteq~I$, which is one of the two inclusions of \cref{thm:Anns}.

\section{The period lattice of Eisenstein series}
\label{sec:units}
In this section,  we introduce the period lattice in the space of Eisenstein series. This lattice is closely related to the cuspidal subgroup of $J_0(N)$ and to modular units by the work of Stevens \cite{stevens1985}.  We show that the $q$-expansions of elements of the period lattice have coefficients in $\Z_{(p)}$ by first proving an analogous integrality property for $q$-expansions of modular units. 

Recall from \cref{subsec:modular curves and Hecke operators} the notation $Y_0(N)^\an \subset X_0(N)^\an$ for the analytic modular curves,  $Y_0(N) \subset X_0(N)$ for their $\Zp$-models, and $C_0(N)=X_0(N)-Y_0(N)$ for the set of cusps. If $R$ is a commutative $\Z_{(p)}$-algebra,  let $Y_0(N)_R$ and $X_0(N)_R$ denote the base-changes of $Y_0(N)$ and $X_0(N)$ to $R$. 
\subsection{The period lattice}
For each Eisenstein series $f \in E_2(N)$,  consider the period map $
\int f: H_1(Y_0(N)^\an,\Z) \to \C,$ defined by
\[
\int f:  \gamma  \mapstochar\longrightarrow \int_\gamma f(z)\,dz.
\]
The \emph{period lattice} $\E$ is the $\Zp$-module consisting of those Eisenstein series $f$ for which the image of $\int f$ is contained in $\Zp$. 

Let $U = (\sO_{Y_0(N)^\an}^\times /\C^\times) \otimes_\Z \Zp$ be the $\Zp$-module of (analytic) modular units modulo scalars.  We will refer to elements of $U$ as \emph{scalar classes} of modular units; note that the divisor of a modular unit depends only on its scalar class. Let $\tC = \Div^0(C_0(N)) \otimes_\Z \Zp$ be the $\Zp$-module of degree-zero divisors with cuspidal support. By definition,
\[
C = \tC /\mathrm{div}(U).
\]
Stevens establishes the isomorphism
\begin{equation}
\label{eq:D}
\cD:U \isoto \E,  \quad \cD(u)\,d\log(q)=d\log(u),
\end{equation}
where $d\log(q)=2 \pi i\,dz$. He proves that $\mathrm{div}(u)=\Res(2\pi i \cD(u))$ for all $u \in U$ \cite[pg.~521]{stevens1985}, where $\Res\colon\E~\to~\tC$ is the residue map defined by 
\[
\Res(f) =\sum_{x \in C_0(N)} \Res_x(f(z)dz) [x]. 
\]
 We will use this isomorphism, together with a integrality result for modular units, to study the integrality of $q$-expansions of elements of the period lattice.

\subsection{$\Zp$-integrality of modular units}
We show that every (analytic) modular unit of level $\Gamma_0(N)$ can be written as a complex constant times an \emph{$\Zp$-integral modular unit}\emrule that is, a unit in the ring of regular functions of the smooth model of the modular curve over $\Zp$.  Then we show that $\Zp$-integral modular units have $q$-expansions in $\Zp$ using the algebraic description of the infinity cusp.

The following lemmas provide the reduction from analytic modular units to integral modular units in two steps: first from analytic to rational, then from rational to integral. The second lemma is established using a similar method to that used in the proof of the $q$-expansion principle.

\begin{lem}
\label{lem:analytic to rational units}
The homomorphism
\[
\sO_{Y_0(N)_\Q}^\times \to \sO_{Y_0(N)^\an}^\times /\C^\times
\]
sending a unit on $Y_0(N)_\Q$ to the scalar class of its associated analytic modular unit,  is surjective. 
\end{lem}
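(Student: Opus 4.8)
The plan is to see an analytic modular unit as a rational function on $X_0(N)_\C$ via GAGA, and then to descend the associated divisor from $\C$ to $\Q$, using that the Jacobian $J_0(N)$, the cusps $C_0(N)$, and the Abel--Jacobi map are all defined over $\Q$.

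First I would recall that $X_0(N)^\an$ is the analytification of the proper smooth curve $X_0(N)_\C$, so every meromorphic function on $X_0(N)^\an$ is a rational function on $X_0(N)_\C$ (this is GAGA, or simply compactness of the Riemann surface). An analytic modular unit $u \in \sO_{Y_0(N)^\an}^\times$ has, by definition, divisor supported on the cusps, so it is the analytification of a function $\tilde u$ in the function field of $X_0(N)_\C$ which is a unit on $Y_0(N)_\C$; let $D = \mathrm{div}(\tilde u)$ be its divisor, a degree-zero divisor supported on the cusps $C_0(N)$.

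Next I would use that, because $N$ is square-free, every cusp of $X_0(N)$ is rational over $\Q$ (the cusps are indexed by the positive divisors of $N$). Hence $D$ is a $\Q$-rational divisor on $X_0(N)_\Q$, supported on the cusps, that is, $D \in \Div^0(C_0(N))$. By construction $D$ is principal over $\C$, so its class in $J_0(N)(\C)$ vanishes. Since $X_0(N)$ has a $\Q$-rational point (any cusp), the Abel--Jacobi map identifies $\mathrm{Pic}^0(X_0(N)_\Q)$ with $J_0(N)(\Q)$ and $\mathrm{Pic}^0(X_0(N)_\C)$ with $J_0(N)(\C)$, compatibly with the inclusion $J_0(N)(\Q) \hookrightarrow J_0(N)(\C)$. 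Therefore the class of $D$ in $\mathrm{Pic}^0(X_0(N)_\Q) = J_0(N)(\Q)$ vanishes as well, so $D = \mathrm{div}(v)$ for some $v$ in the function field of $X_0(N)_\Q$; as $\mathrm{div}(v) = D$ is supported on the cusps, $v \in \sO_{Y_0(N)_\Q}^\times$.

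Finally I would compare $u$ and $v$: the base change $v_\C$ of $v$ to $\C$ satisfies $\mathrm{div}(v_\C) = D = \mathrm{div}(\tilde u)$ on the proper curve $X_0(N)_\C$, so $\tilde u/v_\C$ is a global regular function on $X_0(N)_\C$, hence a nonzero constant $\lambda \in \C^\times$. Analytifying gives $u = \lambda \cdot v^\an$, where $v^\an$ denotes the analytic modular unit associated to $v$; thus $u$ and $v^\an$ have the same scalar class, which proves surjectivity. The two ingredients doing the real work are the GAGA identification of meromorphic with rational functions on $X_0(N)^\an$ and the rationality of the cusps for square-free $N$; both are classical, and the descent itself is formal once one observes that $J_0(N)$ and its cuspidal divisors are defined over $\Q$. (The rationality of the cusps could be bypassed by a direct Galois-descent argument applied to $D$, but invoking it makes the descent immediate.)
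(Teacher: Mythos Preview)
Your proof is correct and follows essentially the same line as the paper's: take the cuspidal divisor of the analytic unit, observe it lies in $\Div^0(C_0(N))$ (using that the cusps are $\Q$-rational for square-free $N$, which the paper takes as given), and then show this divisor is already principal over $\Q$ by the injectivity of $\Pic^0(X_0(N)_\Q) \to \Pic^0(X_0(N)_\C)$. The only cosmetic differences are that the paper phrases the conclusion via injectivity of the divisor map rather than your explicit comparison of $u$ and $v_\C$, and that the paper cites the Stacks Project for the injectivity of Picard groups under base change while you invoke the Abel--Jacobi identification $\Pic^0(X_0(N)_\Q)\cong J_0(N)(\Q)$ together with $J_0(N)(\Q)\hookrightarrow J_0(N)(\C)$; these are equivalent justifications of the same step.
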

\begin{proof}
Since the divisor map $\mathrm{div}: \sO_{Y_0(N)^\an}^\times /\C^\times \to  \Div^0(C_0(N))$ is injective,  the lemma states that for each analytic unit $u^\an \in \sO_{Y_0(N)^\an}^\times$, there is an algebraic unit $u \in \sO_{Y_0(N)_\Q}^\times$ with the same divisor.

Let $u^\an \in \sO_{Y_0(N)^\an}^\times$ and let $D =\mathrm{div}(u^\an)$ in $\tC$, and consider the divisor class $\bar{D} \in \Pic(X_0(N)_\Q)$ of $D$. Then $\bar{D}$ is in the kernel of the composite map
\[
\Pic(X_0(N)_\Q) \to \Pic(X_0(N)_\C) \to \Pic(X_0(N)^\an).
\]
The first map is injective by \cite[\href{https://stacks.math.columbia.edu/tag/0CC5}{Tag 0CC5}]{stacks-project}, and the second map is injective (in fact, an isomorphism) by GAGA (see \cite[Th\'eor\`eme 4.4, pg.~329]{GAGA}). This injectivity implies that $\bar{D}=0$ in $\Pic(X_0(N)_\Q)$, so $D=\mathrm{div}(u)$ for some rational function $u$ on $X_0(N)_\Q$.  Since $D$ is supported on the cusps,  $u \in \sO_{Y_0(N)_\Q}^\times$ as desired.
\end{proof}

\begin{lem}
\label{lem:rational to integral units}
Base change induces an surjection
\[
\sO_{Y_0(N)}^\times \to \sO_{Y_0(N)_\Q}^\times/\Q^\times.
\]
\end{lem}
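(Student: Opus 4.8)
The plan is to exploit the good reduction of $X_0(N)$ at $p$: the integral model $Y_0(N)$ over $\Zp$ is regular with integral special fibre, so a function on the generic fibre has a well-defined order of vanishing along the special fibre, and multiplying it by the appropriate integer power of $p$ makes it a unit on all of $Y_0(N)$. This is the same mechanism that underlies the $q$-expansion principle, hence the remark in the text.

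First I would record the local structure of $Y_0(N)$ as a scheme over $\Zp$. Since $p \nmid N$, the curve $X_0(N)$ is smooth over $\Zp$ with integral fibres, so $Y_0(N) = X_0(N) - C_0(N)$ is an integral, regular (hence normal) Noetherian scheme whose special fibre $Y_0(N)_{\F_p}$ is integral. Let $\eta$ denote the generic point of $Y_0(N)_{\F_p}$. Then $\sO_{Y_0(N),\eta}$ is a discrete valuation ring whose fraction field is the function field $K$ of $Y_0(N)$, which is also the function field of $X_0(N)_\Q$; since $Y_0(N)$ is flat over $\Zp$ with reduced special fibre, that special fibre is the reduced closed subscheme cut out by $p$, so $p$ is a uniformizer at $\eta$. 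Write $v\colon K^\times \to \Z$ for the corresponding valuation, so that $v(p) = 1$.

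Next I would describe units in terms of valuations. Because $Y_0(N)$ is integral, normal and Noetherian, an element $f \in K^\times$ lies in $\sO_{Y_0(N)}^\times$ if and only if $w(f) = 0$ for every codimension-one point $w$ of $Y_0(N)$; and the codimension-one points are precisely $\eta$ together with the closed points of the generic fibre $Y_0(N)_\Q$. For the same reason, $f$ lies in $\sO_{Y_0(N)_\Q}^\times$ if and only if $w(f) = 0$ for every closed point $w$ of $Y_0(N)_\Q$, equivalently $\mathrm{div}(f)$ on $X_0(N)_\Q$ is supported on the cusps. Note also that each local ring of $Y_0(N)$ at a closed point of the generic fibre contains $\Q$, so $w(p) = 0$ for every such $w$.

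Finally, given $u \in \sO_{Y_0(N)_\Q}^\times$, I would set $n = v(u) \in \Z$ and consider $p^{-n}u$. Since $v(p) = 1$, this function has valuation $0$ at $\eta$, and it has valuation $0$ at every closed point of $Y_0(N)_\Q$ because $u$ does and $p$ is a unit there; by the previous paragraph $p^{-n}u \in \sO_{Y_0(N)}^\times$. Then $u = p^n \cdot (p^{-n}u)$ with $p^n \in \Q^\times$ exhibits $u$ in the image of the base-change map modulo $\Q^\times$, so the map is surjective. The crux — and the only genuinely geometric ingredient — is the good-reduction input of the first step, namely that $Y_0(N)$ is regular with integral special fibre; once that is in hand the argument collapses to the standard valuative criterion for a function on a normal scheme to be a unit, and I expect no real obstacle in the remaining bookkeeping.
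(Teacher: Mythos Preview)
Your proof is correct and follows essentially the same approach as the paper: both exploit that $Y_0(N)$ is normal with irreducible special fibre cut out by $p$, so that multiplying $u$ by the appropriate power of $p$ kills its divisor on the special fibre and leaves a global unit. The only cosmetic difference is that the paper determines the exponent $n$ by choosing a closed point $x$ of the special fibre, identifying the completed local ring with $W(k)\lb T\rb$, and reading off the $p$-adic valuation of the constant term there, whereas you go directly to the generic point $\eta$ of the special fibre and set $n = v(u)$; these give the same $n$ precisely because the special fibre is irreducible, and your formulation is arguably the more direct one.
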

\begin{proof}
Let $u \in  \sO_{Y_0(N)_\Q}^\times$, thought of as a rational function on the arithmetic surface $Y_0(N)$ with zeros and poles along the special fiber $Y_0(N)_{\F_p}$.  Choose a closed point $x \in Y_0(N)_{\F_p}$ with residue field $k$, a finite extension of $\F_p$, and identify the the local ring at $x$ with $W(k)\lb T \rb$, which is possible because $Y_0(N)$ is smooth. Consider the pullback $\tilde{u}$ of $u$ to $W(k)\lb T \rb$. Since $u$ is a unit on $Y_0(N)_\Q$,  the constant term $\tilde{u}(0)$ of $\tilde{u}$ is non-zero, so we can write $\tilde{u}(0)=p^n \alpha$ for some $n \in \Z$ and $\alpha \in W(k)^\times$.  We claim that $p^{-n}u$ is a unit in $\sO_{Y_0(N)}$.  
Indeed, the divisor of $p^{-n}u$ is supported on $Y_0(N)_{\F_p}$ by assumption, and, since $Y_0(N)$ is normal,  it must be a union of irreducible codimension $1$ subvarieties. Since $Y_0(N)$ is smooth, $Y_0(N)_{\F_p}$ itself is irreducible, so the divisor of $p^{-n}u$ is either empty or all of $Y_0(N)_{\F_p}$. 
But, by the definition of $n$, $x$ is not in the divisor of $p^{-n}u$, so the divisor is empty and $p^{-n}u$ is a unit.
\end{proof}

\begin{prop}
\label{prop:integrality of units}
The $q$-expansion of every element of $\sO_{Y_0(N)}^\times$, thought of as an analytic modular unit, is in $\Zp \lb q \rb[q^{-1}]^\times$. 
\end{prop}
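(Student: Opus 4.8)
The plan is to read the $q$-expansion directly off the $\Zp$-model of the curve, using the algebraic description of the cusp $\infty$. Recall that the Tate curve $\mathrm{Tate}(q)$ over $\Zp\lp q\rp$, together with its canonical cyclic subgroup $\mu_N\subset\mathrm{Tate}(q)$ of order $N$ (which makes sense since $p\nmid N$), is an elliptic curve with $\Gamma_0(N)$-level structure over $\Zp\lp q\rp$, and hence determines a morphism of $\Zp$-schemes
\[
\iota\colon \Spec\Zp\lp q\rp \longrightarrow Y_0(N).
\]
This $\iota$ extends to a morphism $\Spec\Zp\lb q\rb\to X_0(N)$ carrying $q=0$ to the section $\infty$, which identifies $\Spf\Zp\lb q\rb$ with the formal completion of $X_0(N)$ along that section; see \cite{katz1977}.

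Now let $u\in\sO_{Y_0(N)}^\times$. Since $\iota$ is a morphism of $\Zp$-schemes and $u$ is a regular function on $Y_0(N)$, the pullback $\iota^*u$ lies in $\Zp\lp q\rp=\Zp\lb q\rb[q^{-1}]$; and because $\iota^*$ is a ring homomorphism taking units to units, in fact $\iota^*u\in\Zp\lb q\rb[q^{-1}]^\times$. Thus the proposition reduces to checking that $\iota^*u$ is the $q$-expansion of the associated analytic modular unit $u^\an$.

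To see this I would compare $\iota$ with the complex-analytic uniformization. Base-changing $\iota$ to $\C$ and composing with the analytification $Y_0(N)_\C\to Y_0(N)^\an$, and using that the cusp $\infty$ of $X_0(N)$ has width one, the analytification of $\mathrm{Tate}(q)$ with its subgroup $\mu_N$ over a punctured disc $0<|q|<\epsilon$, $q=e^{2\pi i z}$, is the family of complex tori $\C/(\Z+\Z z)$ equipped with the cyclic subgroup $\langle 1/N\rangle$. Hence the composite $\Spec\C\lb q\rb\to Y_0(N)^\an$ restricts on the punctured disc to the standard uniformization $z\mapsto[\C/(\Z+\Z z),\,\langle 1/N\rangle]$ near $\infty$, so pulling $u^\an$ back along it produces exactly the Fourier expansion of $u^\an$ in powers of $q$ (see, e.g., \cite{DS2005}). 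Comparing with the previous paragraph, this Fourier expansion equals $\iota^*u\in\Zp\lb q\rb[q^{-1}]^\times$, as desired.

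The substantive inputs are the algebraic description of the cusp $\infty$ (that the Tate curve with its canonical $\Gamma_0(N)$-structure defines a $\Zp$-point reaching $\infty$ and realizes the formal neighborhood of $\infty$) and its compatibility with the analytic uniformization; both are classical for $X_0(N)$ with $p\nmid N$, so once they are granted the proof is immediate. One can also argue without invoking $\iota$: by the proof of \cref{lem:rational to integral units} the divisor of $u$ on $X_0(N)$ is supported on the cusps, so the completion $\hat u$ of $u$ along the section $\infty$, an element of the fraction field of the regular local ring $\Zp\lb q\rb$, has divisor a positive or negative integer multiple of $\{q=0\}$; hence $\hat u\in q^m\Zp\lb q\rb^\times$ for some $m\in\Z$ by normality, and $\hat u$ is again the $q$-expansion of $u$.
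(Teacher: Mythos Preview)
Your main argument is correct and is essentially the paper's own proof: define the cusp $\infty$ on the integral model via the Tate curve, so that pullback along $\iota\colon\Spec\Zp\lb q\rb[q^{-1}]\to Y_0(N)$ lands in $\Zp\lb q\rb[q^{-1}]^\times$, and observe that this pullback agrees with the analytic $q$-expansion. The paper states this via a commutative square relating the integral and analytic $q$-expansion maps; you spell out the compatibility with the complex uniformization in more detail, and add a second argument via the completion at $\infty$, but the substance is the same.
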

\begin{proof}
The $q$-expansion of a modular unit is given by pulling back to the infinity cusp. The infinity cusp extends to the integral model as a morphism
\[
\infty: \Spec(\Z_{(p)}\lb q \rb [q^{-1}]) \to Y_0(N)
\]
of schemes (defined using the Tate elliptic curve). Hence there is a commutative diagram
\[
\xymatrix{
\sO_{Y_0(N)}^\times \ar[d] \ar[r] & \Z_{(p)}\lb q \rb [q^{-1}]^\times \ar[d] \\
\sO_{Y_0(N)^\an}^\times \ar[r] &  \C\lb q \rb [q^{-1}]^\times,
}
\]
where the horizontal arrows are the $q$-expansions (that is, pullback along $\infty$). 
This commutativity implies that the $q$-expansion of every element in the image of $\sO_{Y_0(N)}^\times \to \sO_{Y_0(N)^\an}^\times$ belongs to $\Z_{(p)}\lb q \rb [q^{-1}]^\times$.
\end{proof}

\subsection{$q$-expansions of elements of the period lattice}
We note the effect of the isomorphism $\cD$ from \eqref{eq:D} on $q$-expansions.  Let $u \in \sO_{Y_0(N)^\an}^\times$ and let $u(q)$ be the $q$-expansion of $u$ and $\cD(u)(q)$ be the $q$-expansion of $\cD(u)$. The chain rule and the fact that $dq = 2\pi i q\,dz$ imply 
\begin{equation}
\label{eq:q-exp of D}
\cD(u)(q) = q \frac{u'(q)}{u(q)},
\end{equation}
where $u'(q)=\frac{d}{dq}u(q)$.

\begin{thm}
\label{lem: period lattice in q lattice}
For each $f \in \E$, the $q$-expansion of $f$ has coefficients in $\Zp$. In other words, $\E \subseteq E_2(N,\Zp)$.
\end{thm}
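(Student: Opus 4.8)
The plan is to transport the integrality statement from the modular-unit side to the Eisenstein side along Stevens' isomorphism $\cD\colon U \isoto \E$ of \eqref{eq:D}, using the logarithmic-derivative formula \eqref{eq:q-exp of D} together with the chain of reductions already established in this section.

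First I would record that every element of $U$ is a $\Zp$-linear combination of scalar classes of \emph{integral} modular units. The map of \cref{lem:analytic to rational units} descends to a surjection $\sO_{Y_0(N)_\Q}^\times/\Q^\times \onto \sO_{Y_0(N)^\an}^\times/\C^\times$, and precomposing it with the surjection of \cref{lem:rational to integral units} shows that the natural map $\sO_{Y_0(N)}^\times \to \sO_{Y_0(N)^\an}^\times/\C^\times$ is surjective. Tensoring with $\Zp$, which is right exact, yields a surjection $\sO_{Y_0(N)}^\times \otimes_\Z \Zp \onto U$. Since $\cD$ is a $\Zp$-linear isomorphism, an arbitrary $f \in \E$ can therefore be written as a finite sum $f = \sum_i c_i\,\cD(u_i)$ with $c_i \in \Zp$ and each $u_i \in \sO_{Y_0(N)}^\times$.

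Next I would compute $q$-expansions summand by summand. By \cref{prop:integrality of units}, $u_i(q) \in \Zp\lb q \rb [q^{-1}]^\times$; writing $u_i(q) = q^{n_i} v_i(q)$ with $n_i \in \Z$ and $v_i(q) \in \Zp\lb q \rb^\times$ (a power series with constant term in $\Zp^\times$), formula \eqref{eq:q-exp of D} gives
\[
\cD(u_i)(q) = q\,\frac{u_i'(q)}{u_i(q)} = n_i + q\,\frac{v_i'(q)}{v_i(q)}.
\]
Because $v_i(q)$ is a unit of $\Zp\lb q \rb$, its inverse lies in $\Zp\lb q \rb$, and $v_i'(q) \in \Zp\lb q \rb$, so $q\,v_i'(q)/v_i(q) \in q\Zp\lb q \rb$; adding $n_i \in \Z \subseteq \Zp$ shows $\cD(u_i)(q) \in \Zp\lb q \rb$. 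Hence $f(q) = \sum_i c_i\,\cD(u_i)(q) \in \Zp\lb q \rb$, i.e.\ $f \in M_2(N,\Zp)$; since $f \in E_2(N)$ by hypothesis, $f \in E_2(N) \cap M_2(N,\Zp) = E_2(N,\Zp)$.

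The only genuinely delicate point, and hence the step I would treat most carefully, is the bookkeeping with $\otimes_\Z \Zp$: one must ensure that an arbitrary element of the period lattice $\E$\emrule a priori only a $\Zp$-combination of scalar classes of \emph{analytic} units\emrule is realized as a $\Zp$-combination of scalar classes of \emph{integral} units, so that \cref{prop:integrality of units} applies to each summand. Once that is in place, the remainder is the short manipulation above with logarithmic derivatives of Laurent units.
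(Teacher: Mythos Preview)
Your proof is correct and follows essentially the same route as the paper's: both use \cref{lem:analytic to rational units} and \cref{lem:rational to integral units} to see that $\sO_{Y_0(N)}^\times$ surjects onto $\sO_{Y_0(N)^\an}^\times/\C^\times$, transport this along $\cD$ so that $\E$ is $\Zp$-spanned by logarithmic derivatives of integral units, and then apply \cref{prop:integrality of units} together with formula \eqref{eq:q-exp of D}. The only difference is cosmetic: you spell out the $-\otimes_\Z \Zp$ step explicitly, whereas the paper packages it in the phrase ``$\cD(S)$ spans $\E$.''
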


\begin{proof}
Let $S \subset \sO_{Y_0(N)^\an}^\times$ be the image of the map $\sO_{Y_0(N)}^\times \to \sO_{Y_0(N)^\an}^\times$ sending an integral modular unit to the associated analytic modular unit.  By \cref{lem:analytic to rational units} and \cref{lem:rational to integral units}, the composition $S \to   \sO_{Y_0(N)^\an}^\times \to  \sO_{Y_0(N)^\an}^\times/\C^\times$ is surjective.  Then, by Steven's theorem  \eqref{eq:D},  $\cD(S)$ spans $\E$, so it is enough to show that $\cD(S) \subset~E_2(N,\Zp)$.  

Let $u \in S$ and write the $q$-expansion of $u$ as $q^n v$ for some $v\in \Zp \lb q \rb^\times$ and $n \in \Z$. 
By \eqref{eq:q-exp of D}, the $q$-expansion of $\cD(u)$ is $n+ \frac{ q v'}{v}$. Since $v'$ is in $\Z_{(p)} \lb q \rb$, the $q$-expansion of $\cD(u)$ is in $\Z_{(p)} \lb q \rb$ and $\cD(u) \in E_2(N,\Zp)$.
\end{proof}

\section{The annihilator of the cuspidal subgroup}
\label{sec:Ann(C)}

We established the inclusion $\Ann_\bT(T) \supseteq I$ in \cref{sec:Galois reps}.  To complete the proof
of \cref{thm:Anns}.  we need to show that $C$ is large in the sense that
$\Ann_\bT(C)$ is contained in $I$.  The following result implies this needed fact.

\begin{prop}
\label{cor:subquo of C}
There is a subquotient of $C$ whose Pontryagin dual is a free $\bT/I$-module of rank 1. 
\end{prop}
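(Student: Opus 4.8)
The plan is to transport the whole picture into the space $E_2(N)$ of Eisenstein series, where $C$ and $\bT/I$ acquire transparent lattice‑theoretic descriptions, and then dualize. First I would repackage Stevens' results as follows. The residue map extends to a Hecke‑equivariant homomorphism
\[
\rho\colon E_2(N)\longrightarrow \Div^0(C_0(N))\otimes_\Z\C,\qquad
\rho(f)=\sum_{x\in C_0(N)}\Res_x\!\big(2\pi i\,f(z)\,dz\big)\,[x].
\]
Since an Eisenstein series is determined by its residues at the cusps and $\dim_\C E_2(N)=2^r-1=\#C_0(N)-1$, the map $\rho$ is an isomorphism. By \eqref{eq:D} and the identity $\mathrm{div}(u)=\Res(2\pi i\,\cD(u))$, the map $\rho$ carries the period lattice $\E$ onto $\mathrm{div}(U)$, and it carries $\E'':=\rho^{-1}(\tC)$, where $\tC=\Div^0(C_0(N))\otimes_\Z\Zp$, onto $\tC$; hence $\rho$ induces an isomorphism $C\cong\E''/\E$ of $\tT$‑modules. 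Next I would record two inclusions: $\E\subseteq E_2(N,\Zp)$ is \cref{lem: period lattice in q lattice}, while $E_2(N,\Zp)\subseteq\E''$ because a form with $\Zp$‑integral $q$‑expansion has $\Zp$‑integral residue at every cusp (for square‑free $N$ the cusps of $X_0(N)$ are all $\Q$‑rational, so they extend to sections of the smooth $\Zp$‑model, and the residue along such a section is $\Zp$‑valued). Thus $\E\subseteq E_2(N,\Zp)\subseteq\E''$, so $Q:=\E''\big/E_2(N,\Zp)$ is a quotient of $C\cong\E''/\E$, hence a subquotient of $C$.

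Now I would compute $Q^{\vee}$ by combining lattice duality with the Hecke duality of \cref{lem:duality}. Applying $\Hom_{\Zp}(-,\Zp)$ to the inclusion $E_2(N,\Zp)\subseteq\E''$ of full $\Zp$‑lattices in $E_2(N)$, and using the $\tT$‑equivariant isomorphism $E_2(N,\Zp)^{\ast}:=\Hom_{\Zp}(E_2(N,\Zp),\Zp)\cong\tT/\tI$ of the proof of \cref{lem:rank of T/I} (given by $T\mapsto(f\mapsto a_1(Tf))$), one obtains
\[
Q^{\vee}\cong(\tT/\tI)\big/\mathfrak{a},\qquad
\mathfrak{a}:=\{\,T\bmod\tI \;:\; T\cdot\E''\subseteq E_2(N,\Zp)\,\},
\]
the conductor ideal measuring how far the residue lattice $\E''$ sticks out of the $q$‑expansion lattice. (Here I use that $\E''$ is Hecke‑stable, so $a_1(Tf)\in\Zp$ for all $f\in\E''$ is equivalent to $T\cdot\E''\subseteq E_2(N,\Zp)$.) Then I would observe that $\mathfrak{a}$ contains the image of $\Ann_{\tT}(S_2(N))$: since $\Ann_{\tT}(S_2(N))=\ker(\tT\to\End(J_0(N)))$, such an operator acts as $0$ on $J_0(N)$, hence on $C$, hence carries $\tC$ into $\mathrm{div}(U)=\rho(\E)$, i.e. carries $\E''$ into $\E\subseteq E_2(N,\Zp)$. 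As $\bT/I$ is exactly the quotient of $\tT/\tI$ by the image of $\Ann_{\tT}(S_2(N))$, this yields a natural surjection of rings $\bT/I\onto Q^{\vee}$; in particular $Q^{\vee}$ is a $\bT/I$‑module and $\Ann_\bT(Q)\supseteq I$.

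The remaining task—and the main obstacle—is to prove that $\bT/I\onto Q^{\vee}$ is an isomorphism, equivalently that $\mathfrak{a}$ is exactly the image of $\Ann_{\tT}(S_2(N))$, equivalently that $Q$ (and hence $C$) is a \emph{faithful} $\bT/I$‑module. This is precisely the inclusion $\Ann_\bT(C)\subseteq I$ that the proposition is meant to supply, so it cannot be sidestepped; its proof must genuinely exploit the one input not yet built in, namely $\E\subseteq E_2(N,\Zp)$, together with a structural analysis of the cuspidal divisor module $\tC$ over $\tT/\tI$. Concretely, I expect to argue that $\tC$ is sufficiently self‑dual as a $\tT/\tI$‑module (for instance that $\tC\cong\Hom_{\Zp}(\tT/\tI,\Zp)$, or that $\tT/\tI$ is Gorenstein and $\tC$ is free of rank one), which forces the conductor $\mathfrak{a}$ down to the Eisenstein–cuspidal congruence ideal; equivalently, that the discrepancy between the de Rham ($q$‑expansion) and Betti (residue/period) integral structures on $E_2(N)$ is measured precisely by the congruence module $\bT/I$. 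This structural statement is the ``pure thought'' substitute for the explicit order computations of Takagi and Ohta, and establishing it is where the real effort lies.
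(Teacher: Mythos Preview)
Your plan has a genuine gap at exactly the point you flag as ``the main obstacle.'' After constructing the quotient $Q=\E''/E_2(N,\Zp)$ of $C$ and identifying $Q^\vee\cong(\tT/\tI)/\mathfrak{a}$, you still need to prove that the conductor $\mathfrak{a}$ is no larger than the image of $\Ann_{\tT}(S_2(N))$. You propose to do this via a Gorenstein or self-duality statement for $\tC$ over $\tT/\tI$, but you give no argument for such a statement, and there is no reason to expect one to be available without substantial additional input. As you yourself observe, the desired equality is equivalent to $\Ann_\bT(C)\subseteq I$, which is what the proposition is supposed to establish; so at this stage your plan is circular, and the speculative structural route does not close the circle.

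The idea you are missing is to bring the \emph{full} space $M=M_2(N,\Zp)$ of modular forms into play, not just the Eisenstein part. The residue map gives an exact sequence $0\to S\to M\xrightarrow{\Res}\tC$ (residue theorem), so $X:=M/(S+E)$ injects into $\tC/\Res(E)$; combined with the surjection $C\cong\tC/\Res(\E)\twoheadrightarrow\tC/\Res(E)$ coming from $\E\subseteq E$ (\cref{lem: period lattice in q lattice}), this exhibits $X$ as a subquotient of $C$. The point of passing to this smaller $X$ (a submodule of your $Q$) is that its dual can be computed \emph{directly}: applying $\Hom_{\Zp}(-,\Zp)$ to $0\to S\to M/E\to X\to 0$ and invoking \cref{lem:duality}(2) yields $0\to\tI\to\bT\to X^\vee\to 0$, hence $X^\vee\cong\bT/I$. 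No conductor computation, no Gorenstein hypothesis, and no analysis of $\tC$ as a $\tT/\tI$-module is required. In short, by working only inside $E_2(N)$ you have thrown away the duality $\Hom_{\Zp}(S,\Zp)\cong\bT$ that makes the argument go through.
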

\begin{proof}
For this proof,  let $M=M_2(N,\Zp)$, $S=S_2(N,\Zp)$, and $E=E_2(N,\Zp)$.  
Stevens's isomorphism $\cD: U \isoto \E$ \eqref{eq:D} and the inclusion $\E \subset E$ of \cref{lem: period lattice in q lattice} induce a natural surjection 
\[
C \cong \tC / \Res(\E) \onto \tC/ \Res(E).
\]
On the other hand,  the residue theorem gives an exact sequence
\[
0 \to S \to M \xrightarrow{\Res} \tC,
\]
so the residue map induces an injective homomorphism
\[
\frac{M}{S+E} \to \frac{\tC}{\Res(E)}.
\]
Note that the residue map is Hecke-equivariant with respect to the action of Hecke operators on $\tC$ by Picard functoriality (see \cite[pg.~36]{stevens1982}, for example), which induces our chosen action of $\bT$ on $C$. 
Hence $C$ has a subquotient that is isomorphic to $\frac{M}{S+E}$. Let $X=\frac{M}{S+E}$; it remains to show that the Pontryagin dual $X^\vee$ of $X$ is isomorphic to $\bT/I$. 

Consider the exact sequence
\begin{equation}
\label{eq:res mod E}
0 \to S \to M/E \to X \to 0.
\end{equation}
By \cref{lem:duality}(2), there are duality isomorphisms 
\[
\Hom_{\Zp}(S,\Zp) \cong \bT,  \qquad \Hom_{\Zp}(M/E,\Zp) \cong \tI.
\]
Since $M/E$ is torsion free, $\Ext^1_{\Zp}(M/E,\Zp)=0$. Because $X$ is finite,  there is are isomorphisms $\Hom_{\Zp}(X,\Zp)=0$ and $\Ext^1_{\Zp}(X,\Zp) \cong X^\vee$.
Applying $\Hom_{\Zp}(-,\Zp)$ to \eqref{eq:res mod E}, we obtain an exact sequence
\[
0 \to \tI \to \bT \to X^\vee\to 0,
\]
so $X^\vee \cong \bT/I$.
\end{proof}

The proposition implies that $C$ has a subquotient whose annihilator is $I$. Since any element of $\bT$ that annihilates $C$ will annihilate this subquotient, it follows that $\Ann_\bT(C) \subseteq I$ and this completes the proof of \cref{thm:Anns}. By \cref{cor:order of T and C},  $C$ has the same cardinality as its subquotient $X$. We record some implications of this as a corollary.

\begin{cor} \hfill
\label{cor: gens of C dual}
\begin{enumerate}
\item The period lattice $\E$ coincides with the $q$-expansion lattice $E_2(N,\Zp)$.
\item There is an isomorphism of $\bT$-modules
\[
\frac{M}{S+E} \to C
\]
sending $f \in M$ to the class of $\Res(f)$ in $C$.
\item The Pontryagin dual of $C$ is a cyclic $\bT$-module generated by the homomorphism $\lambda: C \to \Q_p/\Z_p$ defined as follows: let $x \in C$,  choose $f \in M$ such that $\Res(f) \in \tC$ represents the class of $x$,  and write $f=a e + b g$ for $a, b \in \Q_p$, $e \in E$, and $g \in S$; then define $\lambda(x) = b a_1(g) \bmod{\Z_p}$.
\end{enumerate}
\end{cor}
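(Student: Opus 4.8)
The plan is to deduce all three parts from the cardinality count implicit in the preceding discussion: by \cref{cor:order of T and C} and \cref{thm:C=T}, the group $C$ has order equal to the index $[\bT:I]$, while \cref{cor:subquo of C} exhibits $X = M/(S+E)$ as a subquotient of $C$ with $X^\vee \cong \bT/I$, hence $|X| = [\bT:I] = |C|$. A subquotient of a finite group with the same cardinality as the group must equal the group, so the chain of maps
\[
C \cong \tC/\Res(\E) \onto \tC/\Res(E) \hookleftarrow \frac{M}{S+E} = X
\]
constructed in the proof of \cref{cor:subquo of C} consists entirely of isomorphisms.

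For part (1), the surjection $C \cong \tC/\Res(\E) \onto \tC/\Res(E)$ is induced by the inclusion $\Res(\E) \subseteq \Res(E)$ coming from $\E \subseteq E = E_2(N,\Zp)$ (\cref{lem: period lattice in q lattice}); since this surjection is an isomorphism of finite groups, $\Res(\E) = \Res(E)$, and because $\Res$ is injective on $E_2(N)$ (the residue theorem sequence $0 \to S \to M \xrightarrow{\Res} \tC$ shows $\ker(\Res|_E) \subseteq S \cap E = 0$), we conclude $\E = E_2(N,\Zp)$. For part (2), with $\E = E$ now established the middle term $\tC/\Res(E)$ literally equals $C$, and the injection $X \to \tC/\Res(E)$ of the previous proof becomes the asserted map $M/(S+E) \to C$, $f \mapsto [\Res(f)]$; it is an isomorphism of $\bT$-modules since it is an injective map of finite groups of equal order and is Hecke-equivariant by the Picard-functoriality compatibility already noted.

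For part (3), I would combine part (2) with the duality isomorphism $X^\vee \cong \bT/I$ from \cref{cor:subquo of C} to see that $C^\vee$ is cyclic over $\bT$, and then identify an explicit generator. Dualizing the exact sequence $0 \to S \to M/E \to X \to 0$ gave $0 \to \tI \to \bT \to X^\vee \to 0$ via the pairings of \cref{lem:duality}; unwinding, the generator of $X^\vee \cong \bT/I$ corresponds to the functional $X \to \Q_p/\Z_p$ obtained as the connecting map applied to $1 \in \bT = \Hom_{\Zp}(S,\Zp)$, i.e. to the element $T \mapsto a_1(Tg)$ for the cuspidal component $g$ of a lift. Concretely, given $x \in C$, pick $f \in M$ with $\Res(f)$ representing $x$ and decompose $f = ae + bg$ with $a,b \in \Q_p$, $e \in E$, $g \in S$ (possible after clearing denominators since $M_\Q = S_\Q \oplus E_\Q$ and $X$ is finite); the pairing then outputs $b\,a_1(g) \bmod \Z_p$, which is exactly $\lambda(x)$. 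One must check this is well-defined — independence of the choices of $f$ and of the decomposition follows because changing $f$ by an element of $S+E$ changes $(a,b,e,g)$ by something landing in $S + E$, which the functional kills on the nose (the $E$-part contributes $0$ since the functional factors through $M/E$, and an integral cuspidal change contributes an element of $\Z_p$). I expect the only mildly delicate point to be bookkeeping the rational coefficients $a, b$ and confirming that $\lambda$ genuinely descends to $C$ and generates $C^\vee$ over $\bT$ rather than merely over $\bT/I$ — but since $I$ annihilates $C$ by \eqref{eq:anns}, generating over $\bT/I$ is the same as generating over $\bT$, so this is immediate once the identification with the connecting map is in place.
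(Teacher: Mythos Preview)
Your proposal is correct and follows the paper's approach throughout: parts (1) and (2) are argued identically via the cardinality squeeze $|X|=[\bT:I]=|C|$ forcing every map in the chain $X \hookrightarrow \tC/\Res(E) \twoheadleftarrow \tC/\Res(\E) \cong C$ to be an isomorphism, and part (3) is the same identification of $\lambda$ with the image of $1\in\bT$ under the boundary $\bT=\Hom_{\Zp}(S,\Zp)\to\Ext^1_{\Zp}(X,\Zp)\cong X^\vee$. The only cosmetic difference is in how that last identification is checked: the paper writes down the pushout extension $Z$ representing $\delta(1)$ and the pullback extension $Z'$ representing $\lambda'$ and gives an explicit isomorphism $Z\to Z'$, whereas you unwind the connecting map directly; both compute the same thing, and your remark that well-definedness reduces to $a_1$ of an integral cusp form lying in $\Zp$ is exactly the content of that verification.
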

\begin{proof}
Continue with the notation as in the proof of \cref{cor:subquo of C}.  Recall that $X = \frac{M}{S+E}$ is Pontriagyn dual to $\bT/I$, so there is an equality of cardinalities $|X|~=~|\bT/I|$.
The module $X$ is a subquotient of $C$ as follows
\begin{equation}
\label{eq:X subquo C}
\xymatrix{
X \ar@{^(->}[r]^-{\Res} & \tilde{C}/\Res(E) & \ar@{->>}[l]   \tilde{C}/\Res(\E)  \ar[r]^-\sim & C.
}
\end{equation}
Moreover,  $|C|=|\bT/I|$ by \cref{cor:order of T and C}.  The maps in \eqref{eq:X subquo C} imply a string of inequalities of cardinalities
\[
|\bT/I|= | X| \le  \left|\tilde{C}/\Res(E)\right| \le \left| \tilde{C}/\Res(\E)\right|  = |C| = |\bT/I|.
\]
Since the beginning and end of the string are equal,  all the inequalities are in fact equalities. This implies that all the maps in \eqref{eq:X subquo C} are isomorphisms.

The fact that the surjection $ \tilde{C}/\Res(\E) \onto \tilde{C}/\Res(E)$
is an isomorphism implies $\Res(\E)=\Res(E)$. Since $\Res$ is injective on $E_2(N)$, this shows that $E=\E$, proving (1).  The map described in (2) is the composition of the isomorphisms in \eqref{eq:X subquo C}. 

The homomorphism $\lambda$ defined in (3) is the composition of the isomorphism defined in (2) with the map $\lambda': X \to \Q_p/\Z_p$ defined by $\lambda'(ae+bg)=ba_1(g) \bmod{\Z_p}$.  By (2), it is enough to show that $\lambda'$ generates the dual of $X$.  First note that $\lambda'$ is well defined since $M[1/p]=S[1/p] \oplus E[1/p]$ as $\Q$-vector spaces. The element of $\Ext^1_{\Z_p}(X,\Z_p)$ corresponding to $\lambda'$ is class of the extension
\[
0 \to \Z_p \to Z' \to X \to0
\]
where $Z'=\{(x,\alpha) \in X \times \Q_p \ | \ \lambda'(x) \equiv \alpha \bmod{\Z_p}\}$.  On the other hand, the proof of \cref{cor:subquo of C} shows that $\Ext^1_{\Z_p}(X,\Z_p)$ is a free $\bT/I$-module generated by the class of the extension
\[
0 \to \Z_p\to Z \to X \to 0
\]
where $Z =\displaystyle \frac{M/E \oplus \Z_p}{\dia{(g,-a_1(g)) \ : \ g \in S}}$. 
There is a map
\[
Z \to Z', \quad (f, \alpha) \mapsto (f , \alpha + ba_1(g))
\]
where $f=ae+bg \in M$ with $e \in E$, $g \in S$, $a,b \in \Q_p$, and $\alpha \in \Z_p$.  A simple computation shows that this map is an isomorphism of extensions. Hence the class of the extension given by $Z'$ generates $\Ext^1_{\Z_p}(X,\Z_p)$ and $\lambda'$ generates the dual of~$X$.
\end{proof}

\section{Complement: explicit bases of Eisenstein series and modular units}  We give explicit bases of $\E$ and $U$ as $\Zp$-modules.  For $d \mid N$ with $d>1$, let $h_d \in U$ be the scalar class of $\left( \frac{\eta(dz)}{\eta(z)}\right)^{12N}$, where $\eta(z)$ is the Dekekind eta function. 
Let $f_d=\cD(h_d) \in \E$, and note that
\[
f_d(z) = 12N (E_2(dz)-E_2(z)).
\]
where $E_2(z)$ is the (non-holomorphic) normalized Eisenstein series of weight $2$ and level $1$.
\begin{cor} \hfill 
\label{cor:lattices equal}
\begin{enumerate}
\item The set  $\{f_d \ : \ d>1, \ d \mid N\}$ is a basis for $E_2(N,\Zp)$ as a $\Zp$-module.
\item The period lattice $\E$ coincides with the $q$-expansion lattice $E_2(N,\Zp)$.
\item The set $\{h_d \ : \ d>1, \ d \mid N\}$ is a basis for $U$ as a $\Zp$-module.
\end{enumerate}
\end{cor}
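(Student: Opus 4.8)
The plan is to deal with the three parts in turn, observing first that part (2) requires nothing new: it is exactly \cref{cor: gens of C dual}(1), which asserts $\E=E_2(N,\Zp)$. Once this is in hand, parts (1) and (3) are essentially equivalent, since Stevens's isomorphism $\cD\colon U\isoto\E$ of \eqref{eq:D} identifies $U$ with $\E=E_2(N,\Zp)$ and sends each $h_d$ to $f_d$; so it will be enough to prove that $\{f_d : d\mid N,\ d>1\}$ is a $\Zp$-basis of $E_2(N,\Zp)$, which is part (1), and then (3) will follow by transport of structure along $\cD$.

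To prove part (1), I would first record the $q$-expansion of $f_d=\cD(h_d)$. Combining \eqref{eq:q-exp of D} with the classical identity $q\frac{d}{dq}\log\eta(z)=\frac{1}{24}-\sum_{m\ge 1}\sigma_1(m)q^m$ gives, for every $m\ge 1$,
\[
a_m(f_d)=12N\bigl(\sigma_1(m)-d\,\sigma_1(m/d)\bigr),
\]
with the convention $\sigma_1(m/d)=0$ when $d\nmid m$. These $f_d$ lie in $E_2(N,\Zp)$ — because $f_d\in\E\subseteq E_2(N,\Zp)$ by \cref{lem: period lattice in q lattice}, or directly from the formula — and there are precisely $2^r-1$ of them, the rank of the free $\Zp$-module $E_2(N,\Zp)$. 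So by Nakayama's lemma it suffices to check that the reductions $\bar f_d$ are linearly independent over $\F_p$. Suppose $\sum_{d\mid N,\ d>1}c_d\bar f_d=0$ with $c_d\in\F_p$, and put $\Sigma=\sum_d c_d$. The coefficient of $q^1$ reads $12N\Sigma=0$, so $\Sigma=0$ since $p\nmid 12N$; then the coefficient of $q^m$ reads $\sum_{d\mid\gcd(m,N),\ d>1}d\,\sigma_1(m/d)\,c_d=0$ for every $m$. Letting $m$ run over the divisors of $N$ and inducting on the number of prime factors of $m$: the term $d=m$ contributes $m\,c_m$, while every proper divisor $d>1$ of $m$ already has $c_d=0$ by the inductive hypothesis, and $p\nmid m$; hence $c_m=0$. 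So all $c_d$ vanish and $\{f_d\}$ is a $\Zp$-basis of $E_2(N,\Zp)$, proving (1).

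Part (3) then follows formally: $\cD$ is an isomorphism of $\Zp$-modules from $U$ onto $\E=E_2(N,\Zp)$ carrying $h_d$ to $f_d$, so a $\Zp$-basis of the target pulls back to a $\Zp$-basis of $U$.

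The step I expect to be the crux is the mod-$p$ independence in part (1). It is immediate that the $f_d$ form a $\Q$-basis of the rational Eisenstein space, the quasimodular forms $E_2(dz)$ for $d\mid N$ being linearly independent; the content here is the integral refinement, and that is exactly where the explicit shape of the coefficients $\sigma_1(m)-d\,\sigma_1(m/d)$ — in particular that the ``diagonal'' contribution from $d=m$ is a unit multiple of $c_m$ — makes the inductive unwinding over the divisor lattice of $N$ succeed.
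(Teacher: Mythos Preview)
Your argument is correct. The logical ordering and the method for part~(1) differ from the paper's.

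You cite \cref{cor: gens of C dual}(1) for part~(2) and then deduce~(3) from~(1) via $\cD$; the paper instead proves~(1) first and uses it to give an \emph{alternate} proof of~(2) (each $f_d$ lies in $\E$, so~(1) yields $E_2(N,\Zp)\subseteq\E$, and \cref{lem: period lattice in q lattice} gives the reverse inclusion), before deducing~(3) the same way you do.

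For part~(1), the paper takes a dual-basis route: it defines functionals $l_d(f)=\sum_{t\mid d}\mu(d/t)\,\sigma(d/t)\,a_t(f)$, realized by the Hecke operator $t_d=\prod_{\ell\mid d}(U_\ell-\ell-1)$ via the pairing of \cref{lem:duality}, and computes $l_d(f_s)=-12Nd\,\delta_{d,s}$; since $-12Nd\in\Zp^\times$, this shows the restriction $\Hom_{\Zp}(E_2(N,\Zp),\Zp)\to\Hom_{\Zp}(E',\Zp)$ is surjective, hence $E'=E_2(N,\Zp)$. Your approach bypasses duality and Hecke operators in favor of Nakayama and a direct mod-$p$ independence check, exploiting the triangular shape of the coefficient matrix $(a_m(f_d))$ along the divisor lattice of $N$ (squarefreeness of $N$ being what makes the induction on the number of prime factors of $m$ work cleanly). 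The paper's method is a bit more structural---it exhibits the dual basis explicitly and connects it to the Hecke action---while yours is more elementary and self-contained; both encode the same underlying fact that the pairing between $\{f_d\}$ and the coefficients $\{a_m\}_{m\mid N}$ has $p$-adic unit determinant.
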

\begin{proof}
Note that (2) has already been proven in \cref{cor: gens of C dual}(1); we give an alternate proof here.
Since each $f_d$ is in $\E$,  (1) implies that $E_2(N,\Zp) \subseteq \E$. Together with \cref{lem: period lattice in q lattice}, this inclusion implies (2).
Since $f_d=\cD(h_d)$ and $\cD$ is an isomorphism, (1) and (2) imply (3).

It remains to prove (1).  Let $E' \subseteq E_2(N,\Zp)$ be the $\Zp$-submodule generated by $\{f_d \ | \ d>1, \ d \mid N\}$.   Since the set $\{f_d \ | \ d>1, \ d \mid N\}$ is a $\Q$-basis for $E_2(N,\Q)$ (see \cite[Theorem 4.6.2, pg.~133]{DS2005}, for example),  the index of $E'$ in $E_2(N,\Zp)$ is finite.  To show this index is $1$, it is enough to show that the restriction map
\begin{equation}
\label{eq:Hom E}
\Hom_{\Zp}(E_2(N,\Zp),\Zp) \to \Hom_{\Zp}(E',\Zp)
\end{equation}
is surjective.

For any divisor $d \mid N$, define a functional $l_d: E_2(N,\Zp)\to \Zp$ by
\begin{equation}
\label{eq:l_d}
l_d(f) = \sum_{t \mid d} \mu(d/t) \sigma_{d/t} a_t(f).
\end{equation}
where $\mu$ is the M\"obius function and $\sigma$ is the divisor-sum function.  
In terms of the duality pairing of Lemma \ref{lem:duality},  $l_d$ corresponds to the Hecke operator 
\[
t_d:=  \prod_{\{\ell \mid N : \ell \text{ prime}\}}(U_\ell-\ell-1),
\]
in that $l_d(f) = a_1 \left(t_d f \right)$.
Now let $d, s \mid N$ with $d,s>1$.  An elementary computation,  either directly using \eqref{eq:l_d} or by computing the action of $t_d$ on $f_s$, shows
\[
l_d(f_s) = \begin{cases}
-12Nd & s=d \\
0 & s \ne d.
\end{cases} 
\]
Since $-12Nd \in \Zp^\times$, the elements $\ell_d$ generate $\Hom_{\Zp}(E',\Zp)$ and the map \eqref{eq:Hom E} is surjective.
\end{proof}

\cref{cor:lattices equal}(3) was first proven by Takagi \cite[Theorem 4.1]{takagi1997} using Kubert--Lang theory. 

\bibliographystyle{alpha}
\bibliography{oct2018}

 \end{document}